\documentclass[reqno,11pt]{amsart}

\usepackage{amsmath, amssymb, amsthm}
\usepackage{geometry}
\usepackage{hyperref}
\usepackage{color}

\newtheorem{theorem}{Theorem}[section]
\newtheorem{lemma}[theorem]{Lemma}
\newtheorem{proposition}[theorem]{Proposition}
\newtheorem{definition}[theorem]{Definition}
\newtheorem{remark}[theorem]{Remark}

\numberwithin{equation}{section}

\newcommand{\norm}[1]{\|#1\|}
\newcommand{\grad}{\nabla}

\begin{document}

\title[Wave-MGT System with Logarithmic Nonlinearity]{Global Well-Posedness and Conditional Asymptotic Stability for a Coupled Wave-MGT System with Logarithmic Nonlinearity}
\author{Tae Gab Ha}

\address{Department of Mathematics and Institute of Pure and Applied Mathematics, Jeonbuk National University, Jeonju 54896, Republic of Korea}

\email{tgha@jbnu.ac.kr}

\subjclass[2020]{35L71; 35B40; 93D20; 35L05}

\keywords{Wave equation; Moore-Gibson-Thompson equation; logarithmic nonlinearity; indirect stabilization; potential well; asymptotic stability}

\maketitle

\begin{abstract}
We study a coupled system formed by a conservative wave equation and a dissipative Moore-Gibson-Thompson (MGT) equation on a bounded domain. The wave component is driven by the logarithmic source $f(u)=|u|^{\gamma-2}u\ln|u|$, $2<\gamma<\frac{2(n-1)}{n-2}$, and carries no direct damping. Rather than employing cross-multiplier arguments, we introduce the coupled variable $w=v+\tau v_{t}$, which reveals the exact energy structure associated with the interaction term. This formulation yields a genuine coupled energy together with a coercive quadratic form $Q_{\alpha}(u,w)=\norm{\nabla u}_{2}^{2}+\norm{\nabla w}_{2}^{2}+2\alpha(u,w)$, provided that $|\alpha|<\lambda_{1}$. Based on this structure, we construct a coupled potential well and prove global well-posedness of weak solutions for initial data lying below the corresponding well depth and inside the stable set. We also show that the energy is strictly dissipative through the MGT component. In addition, a modal analysis of the linearized system identifies a high frequency spectral obstruction to uniform exponential decay, quantifying the weakness of the dissipation transfer to the wave branch. Finally, assuming the relative compactness of the trajectory in the natural energy space and imposing $0<|\alpha|<\lambda_1$, we apply LaSalle's invariance principle to establish conditional asymptotic stability of the zero equilibrium. The result provides a structurally consistent indirect stabilization theorem for the coupled wave--MGT dynamics without relying on unjustified exponential decay claims.
\end{abstract}

\section{Introduction}

The mathematical analysis of evolution equations with logarithmic nonlinearities has attracted sustained attention because such terms arise naturally in mathematical physics and, at the same time, exhibit a delicate analytical structure. The logarithmic Schr\"odinger model introduced by Bia\l ynicki-Birula and Mycielski \cite{Bialynicki} is one of the classical starting points of this theory, while the foundational work of Cazenave and Haraux \cite{Cazenave} developed a rigorous PDE framework for evolution equations with logarithmic nonlinearities. In bounded domains, logarithmic sources of wave type have since been studied from several complementary viewpoints, including global existence, blow-up, and potential well analysis; see, for instance, \cite{Lian1, Lian2, AlGharabli}. In the present paper, the source term
\begin{equation*}
f(u)=|u|^{\gamma-2}u\ln|u|
\end{equation*}
is superquadratic for large amplitudes, changes sign near the origin, and fails to fit into the usual scale invariant templates of pure power nonlinearities. These features make the interplay between nonlinear growth and dissipation especially subtle.

Parallel to these developments, the Moore-Gibson-Thompson equation has emerged as a basic model in nonlinear acoustics and high intensity ultrasound. It is a third order in time evolution equation that incorporates thermal relaxation effects and avoids the paradox of infinite propagation speed present in classical Fourier based models; see, e.g., \cite{Kaltenbacher, Lasiecka, DellOro, Pellicer}. For the normalized MGT dynamics considered here, the dissipative regime is captured by the parameter restriction $b>\tau>0$, which isolates the physically relevant stable case.

When the wave and MGT components are coupled, the problem enters the realm of indirect stabilization: dissipation acts only on one equation, and the second component is stabilized solely through the coupling. This mechanism has been extensively studied in hyperbolic control theory, beginning with the work of Alabau-Boussouira and collaborators on weakly coupled systems; see \cite{Alabau1, Alabau2, Alabau3, Alabau4}. In related wave type systems, observability and stabilization are closely tied to geometric and compatibility conditions, as highlighted by the Bardos-Lebeau-Rauch theory \cite{Bardos} and by later results on indirectly damped systems with local or weak coupling \cite{Alabau5}. These works show that the transfer of damping from one component to another is highly sensitive to the structure of the coupling and, in general, one should not expect uniform exponential stabilization without additional observability mechanisms.

In this paper we study a linearly coupled system composed of a purely conservative wave equation and a dissipative MGT equation. The wave component is driven by the source term $f(u)=|u|^{\gamma-2}u\ln|u|$, subject to the strictly subcritical restriction
\begin{equation*}
2<\gamma<\frac{2(n-1)}{n-2}.
\end{equation*}
The MGT equation contributes dissipation only through the term $-b\Delta v_{t}$, whereas the wave equation carries no internal damping at all. Consequently, the total energy can decrease only through the MGT component and the zero order coupling. This is precisely the indirect stabilization scenario, but in a particularly delicate form: the wave component retains its own kinetic energy, the coupling is zero order, and the characteristic wave speed of the acoustic part ($c_{w}=1$) does not match the high-frequency speed naturally associated with the MGT dynamics ($c_{mgt}=\sqrt{b/\tau}$). A direct modal analysis of the linearized system (see Proposition~\ref{prop:modal-obstruction} below) shows that the wave branch eigenvalues satisfy
\begin{equation*}
\Re s_{k,\pm}
=
-\frac{\alpha^2}{2(b-\tau)\lambda_k^2}
+o(\lambda_k^{-2}),
\qquad \lambda_k\to\infty,
\end{equation*}
so the dissipation transferred to the wave component is only of order $\lambda_k^{-2}$ at high frequency. In particular, the linearized spectrum has no uniform gap from the imaginary axis. This quantifies the obstruction created by the speed mismatch and explains why the present paper is naturally led toward a qualitative asymptotic stabilization result rather than a uniform decay-rate statement.

The main structural observation of the present work is that the natural coupling variable is not $v$ itself, but rather
\begin{equation*}
w=v+\tau v_{t}.
\end{equation*}
This change of variables constitutes the key step of the analysis. It does not eliminate $v_{t}$ from the equations; instead, it reveals the correct conservative core of the coupled dynamics and leads to an exact energy identity in which the only dissipative contribution is $(b-\tau)\norm{\nabla v_{t}}_{2}^{2}$. The same variable also identifies the correct static pair $(u, w)$ for the potential well construction. In particular, the indefinite interaction term can be embedded into the joint quadratic form
\begin{equation*}
Q_{\alpha}(u,w)=\norm{\nabla u}_{2}^{2}+\norm{\nabla w}_{2}^{2}+2\alpha(u,w),
\end{equation*}
which remains coercive under the small-coupling condition $|\alpha|<\lambda_{1}$. This places the problem in a framework that is structurally consistent with the exact energy of the system, rather than with an artificially symmetrized multiplier ansatz.

Our first main result (Theorem \ref{thm:global_existence}) is a global well-posedness theorem in the coupled stable set. Starting from initial data below the coupled well depth and satisfying the strict positivity of the corresponding Nehari functional, we construct global weak solutions by a Galerkin approximation, prove the exact energy dissipation identity, and show that the stable region is positively invariant. A central technical point is the logarithmic estimate with $\epsilon$-absorption, which allows us to control the singular behavior of the logarithm near the origin without destroying the coercivity of the coupled quadratic form.

Our second main result (Theorem \ref{thm:asymptotic}) concerns the long time dynamics. Under the physically consistent hypothesis that the coupling is nontrivial ($0<|\alpha|<\lambda_{1}$), and assuming the relative compactness of the full trajectory in the natural phase space, we apply LaSalle's invariance principle \cite{LaSalle}. The exact dissipation law identifies the zero-dissipation set, and the coupled potential-well structure then shows that the only invariant state compatible with the subcritical energy level is the zero equilibrium. This yields the conditional asymptotic stability of the coupled wave-MGT system.

The rest of the paper is organized as follows. In Section 2, we introduce the transformed variable $w=v+\tau v_t$, derive the exact coupled energy identity, and record a linearized high frequency spectral obstruction to uniform exponential decay. Section 3 is devoted to the coupled potential well, including the coercivity of the quadratic form, the logarithmic $\epsilon$-absorption estimate, and the positivity of the well depth. In Section 4, we prove global well-posedness and uniqueness of weak solutions below the well depth. Finally, in Section 5, we establish the conditional asymptotic stability of the zero equilibrium by LaSalle's invariance principle.
\section{The model and the exact coupled energy}

Let $\Omega\subset\mathbb{R}^{n}$ ($n\ge 3$) be a bounded domain with smooth boundary. We write $\norm{\cdot}_{p}$ for the norm in $L^{p}(\Omega)$ and $(\cdot,\cdot)$ for the inner product in $L^{2}(\Omega)$. We study
\begin{equation}\label{eq:main}
\begin{cases}
u_{tt}-\Delta u+\alpha(v+\tau v_{t})=|u|^{\gamma-2}u\ln|u|, \quad &\text{in } \Omega\times(0,\infty), \\
\tau v_{ttt}+v_{tt}-\Delta v-b\Delta v_{t}+\alpha u=0, \quad &\text{in } \Omega\times(0,\infty), \\
u=v=0, \quad &\text{on } \partial\Omega\times(0,\infty), \\
u(\cdot,0)=u_{0}, \; u_{t}(\cdot,0)=u_{1}, \quad &\text{in } \Omega, \\
v(\cdot,0)=v_{0}, \; v_{t}(\cdot,0)=v_{1}, \; v_{tt}(\cdot,0)=v_{2}, \quad &\text{in } \Omega.
\end{cases}
\end{equation}
Throughout the paper we assume
\begin{equation}\label{eq:params}
b>\tau>0, \quad |\alpha|<\lambda_{1}, \quad 2<\gamma<\frac{2(n-1)}{n-2},
\end{equation}
where $\lambda_{1}$ denotes the first Dirichlet eigenvalue of $-\Delta$ on $\Omega$. We also define
\begin{equation*}
f(s)=|s|^{\gamma-2}s\ln|s|, \quad F(s)=\int_{0}^{s}f(\xi)d\xi=\frac{1}{\gamma}|s|^{\gamma}\ln|s|-\frac{1}{\gamma^{2}}|s|^{\gamma},
\end{equation*}
with the convention $F(0)=0$.

\subsection{The coupled variable $w=v+\tau v_{t}$}

Set $w=v+\tau v_{t}$. Then
\begin{equation*}
w_{t}=v_{t}+\tau v_{tt}, \quad w_{tt}=v_{tt}+\tau v_{ttt}.
\end{equation*}
Using the second equation in \eqref{eq:main} and the identity
\begin{equation*}
\Delta v+b\Delta v_{t}=\Delta(v+\tau v_{t})+(b-\tau)\Delta v_{t}=\Delta w+(b-\tau)\Delta v_{t},
\end{equation*}
we obtain the augmented system
\begin{equation}\label{eq:augmented}
\begin{cases}
u_{tt}-\Delta u+\alpha w=f(u), \\
w_{tt}-\Delta w-(b-\tau)\Delta v_{t}+\alpha u=0,
\end{cases}
\end{equation}
supplemented with the relation $w=v+\tau v_{t}$.

The point of \eqref{eq:augmented} is not to eliminate $v_{t}$, but to isolate the conservative coupling in the pair $(u, w)$ while keeping the MGT dissipation in the explicit term $-(b-\tau)\Delta v_{t}$.

\begin{proposition}[Exact coupled energy identity]\label{prop:energy_identity}
Assume that $(u, v)$ is a sufficiently smooth solution of \eqref{eq:main} and set $w=v+\tau v_{t}$. Define
\begin{align}\label{eq:energy_def}
\mathcal{E}(t) &:= \frac{1}{2}\norm{u_{t}(t)}_{2}^{2}+\frac{1}{2}\norm{w_{t}(t)}_{2}^{2}+\frac{\tau(b-\tau)}{2}\norm{\nabla v_{t}(t)}_{2}^{2} \nonumber \\
&\quad +\frac{1}{2}\norm{\nabla u(t)}_{2}^{2}+\frac{1}{2}\norm{\nabla w(t)}_{2}^{2}+\alpha(u(t),w(t))-\int_{\Omega}F(u(t))dx.
\end{align}
Then
\begin{equation}\label{eq:energy_dissipation}
\mathcal{E}(t)+(b-\tau)\int_{0}^{t}\norm{\nabla v_{s}(s)}_{2}^{2}ds=\mathcal{E}(0)
\end{equation}
for all $t\ge 0$. In particular, $\mathcal{E}^{\prime}(t)=-(b-\tau)\norm{\nabla v_{t}(t)}_{2}^{2}\le 0$.
\end{proposition}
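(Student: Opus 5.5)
The plan is to test the augmented system \eqref{eq:augmented} against the velocities of the pair $(u,w)$. Each term will then be either an exact time derivative or the single dissipative term. I would multiply the first equation of \eqref{eq:augmented} by $u_t$ and the second by $w_t$, integrate over $\Omega$, and add. Green's formula is legitimate because $u=v=0$ on $\partial\Omega$ for all $t$, which gives $u_t=0$, $w=v+\tau v_t=0$ and $w_t=0$ on $\partial\Omega$. So no boundary terms appear, and we get $(-\Delta u,u_t)=\frac12\frac{d}{dt}\norm{\nabla u}_2^2$ and $(-\Delta w,w_t)=\frac12\frac{d}{dt}\norm{\nabla w}_2^2$. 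The inertial terms give $\frac12\frac{d}{dt}(\norm{u_t}_2^2+\norm{w_t}_2^2)$.

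The two coupling terms combine into a single derivative:
\begin{equation*}
\alpha(w,u_t)+\alpha(u,w_t)=\alpha\frac{d}{dt}(u,w).
\end{equation*}
For the source term I would check directly that $F'=f$. Differentiating $\frac1\gamma|s|^\gamma\ln|s|$ gives $|s|^{\gamma-2}s\ln|s|+\frac1\gamma|s|^{\gamma-2}s$, and the second piece is cancelled by the derivative of $-\frac{1}{\gamma^2}|s|^\gamma$. Since $\gamma>2$, $F$ is $C^1$ with $F(0)=0$, so $(f(u),u_t)=\frac{d}{dt}\int_\Omega F(u)\,dx$ by the chain rule.

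The key step is the MGT term $-(b-\tau)(\Delta v_t,w_t)$. After integration by parts it becomes $(b-\tau)(\nabla v_t,\nabla w_t)$. I would now substitute $w_t=v_t+\tau v_{tt}$, which gives
\begin{equation*}
(b-\tau)(\nabla v_t,\nabla w_t)=(b-\tau)\norm{\nabla v_t}_2^2+\frac{\tau(b-\tau)}{2}\frac{d}{dt}\norm{\nabla v_t}_2^2 .
\end{equation*}
This produces both the extra energy component $\frac{\tau(b-\tau)}2\norm{\nabla v_t}_2^2$ in \eqref{eq:energy_def} and the dissipation. Collecting everything yields $\mathcal E'(t)=-(b-\tau)\norm{\nabla v_t(t)}_2^2$, which is $\le 0$ because $b>\tau$. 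Integrating from $0$ to $t$ gives \eqref{eq:energy_dissipation}.

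The only point that needs care is regularity. ``Sufficiently smooth'' should mean that $u_t$, $w_t$ and $\nabla v_{tt}$ are regular enough for the pairings and the Green formula, and for the differentiation of $t\mapsto\int_\Omega F(u)\,dx$ under the integral. For the latter, I would use the bound $|f(u)|\le C(|u|^{\gamma-1-\epsilon}+|u|^{\gamma-1+\epsilon})$ together with the subcritical range of $\gamma$ and the embedding $H_0^1\hookrightarrow L^{\gamma+\epsilon}$, so that $f(u)u_t\in L^1$. For classical solutions this is immediate. The extension to weak solutions is deferred to the Galerkin argument in Section 4.
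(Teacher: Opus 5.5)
Your proposal is correct and matches the paper's proof: you test the augmented system by $u_t$ and $w_t$, merge the coupling terms into $\alpha\frac{d}{dt}(u,w)$, and use $w_t=v_t+\tau v_{tt}$ to split $(b-\tau)(\nabla v_t,\nabla w_t)$ into the dissipation $(b-\tau)\norm{\nabla v_t}_2^2$ and the derivative of the extra energy term. Your checks that $F'=f$ and that $u_t$, $w$, $w_t$ vanish on $\partial\Omega$ are details the paper leaves implicit.
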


\begin{proof}
Test the first equation in \eqref{eq:augmented} by $u_{t}$. Since
\begin{equation*}
(f(u),u_{t})=\frac{d}{dt}\int_{\Omega}F(u)dx,
\end{equation*}
we get
\begin{equation*}
\frac{d}{dt}\left(\frac{1}{2}\norm{u_{t}}_{2}^{2}+\frac{1}{2}\norm{\nabla u}_{2}^{2}-\int_{\Omega}F(u)dx\right)+\alpha(w,u_{t})=0.
\end{equation*}
Next test the second equation in \eqref{eq:augmented} by $w_{t}=v_{t}+\tau v_{tt}$. This yields
\begin{equation*}
\frac{d}{dt}\left(\frac{1}{2}\norm{w_{t}}_{2}^{2}+\frac{1}{2}\norm{\nabla w}_{2}^{2}\right)-(b-\tau)(\Delta v_{t},w_{t})+\alpha(u,w_{t})=0.
\end{equation*}
By integration by parts and the identity $w_{t}=v_{t}+\tau v_{tt}$,
\begin{equation*}
-(b-\tau)(\Delta v_{t},w_{t})=(b-\tau)(\nabla v_{t},\nabla w_{t})=(b-\tau)\norm{\nabla v_{t}}_{2}^{2}+\frac{\tau(b-\tau)}{2}\frac{d}{dt}\norm{\nabla v_{t}}_{2}^{2}.
\end{equation*}
Adding the two equalities and using
\begin{equation*}
\alpha(w,u_{t})+\alpha(u,w_{t})=\frac{d}{dt}(\alpha(u,w)),
\end{equation*}
we obtain
\begin{equation*}
\frac{d}{dt}\mathcal{E}(t)+(b-\tau)\norm{\nabla v_{t}}_{2}^{2}=0,
\end{equation*}
which integrates to \eqref{eq:energy_dissipation}.
\end{proof}

\subsection{A high frequency spectral obstruction to uniform exponential decay}

The previous proposition identifies the exact dissipative mechanism at the energy level. At the linearized level, one can also quantify how weakly this dissipation is transferred to the conservative wave component at high frequency. The purpose of this subsection is to identify a linearized high frequency obstruction that explains why the present work is formulated in terms of qualitative asymptotic stability rather than a uniform decay rate.

Consider the linearization of \eqref{eq:main} around the zero equilibrium:
\begin{equation}
\label{eq:linearized}
\begin{cases}
u_{tt}-\Delta u+\alpha(v+\tau v_t)=0,\\[1mm]
\tau v_{ttt}+v_{tt}-\Delta v-b\Delta v_t+\alpha u=0.
\end{cases}
\end{equation}
Let $\{e_k\}_{k\ge 1}$ be a Dirichlet eigenbasis of $-\Delta$ on $\Omega$, so that
\[
-\Delta e_k=\lambda_k e_k,
\qquad
0<\lambda_1\le \lambda_2\le \cdots,\qquad \lambda_k\to\infty.
\]
For modal solutions of the form
\[
u(x,t)=a_k e^{st}e_k(x),\qquad v(x,t)=c_k e^{st}e_k(x),
\]
the amplitudes $(a_k,c_k)$ satisfy
\[
\begin{pmatrix}
s^2+\lambda_k & \alpha(1+\tau s)\\
\alpha & \tau s^3+s^2+b\lambda_k s+\lambda_k
\end{pmatrix}
\binom{a_k}{c_k}=0.
\]
Hence the modal characteristic roots are exactly the zeros of
\begin{equation}
\label{eq:Pk}
P_k(s):=(s^2+\lambda_k)\bigl(\tau s^3+s^2+b\lambda_k s+\lambda_k\bigr)-\alpha^2(1+\tau s).
\end{equation}

\begin{proposition}[High frequency obstruction on the wave branch]
\label{prop:modal-obstruction}
Fix $b>\tau>0$ and $\alpha\neq 0$. For each sign $\sigma\in\{+1,-1\}$ there exists, for all sufficiently large $k$, a characteristic root $s_{k,\sigma}$ of $P_k$ such that
\begin{equation}
\label{eq:wave-branch-expansion}
s_{k,\sigma}
=
\sigma i\sqrt{\lambda_k}
-\frac{\alpha^2}{2(b-\tau)\lambda_k^2}
-\sigma i\,\frac{\alpha^2\tau}{2(b-\tau)\lambda_k^{3/2}}
+O(\lambda_k^{-5/2})
\qquad\text{as }k\to\infty.
\end{equation}
In particular,
\begin{equation}
\label{eq:wave-branch-real-part}
\Re s_{k,\sigma}
=
-\frac{\alpha^2}{2(b-\tau)\lambda_k^2}
+O(\lambda_k^{-5/2})
\longrightarrow 0^-.
\end{equation}
Equivalently, since $c_w=1$ and $c_{\mathrm{mgt}}=\sqrt{b/\tau}$,
\begin{equation}
\label{eq:wave-speed-gap}
\Re s_{k,\sigma}
=
-\frac{\alpha^2}{2\tau(c_{\mathrm{mgt}}^2-c_w^2)\lambda_k^2}
+O(\lambda_k^{-5/2}).
\end{equation}
Thus the transfer of dissipation from the MGT component to the wave branch is only of order $\lambda_k^{-2}$ at high frequency, and the linearized spectrum has no uniform gap from the imaginary axis.
\end{proposition}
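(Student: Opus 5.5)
The plan is a direct perturbation (implicit-function / Rouch\'e) argument around the undamped wave frequencies $s_0:=\sigma i\sqrt{\lambda_k}$, exploiting the factorized form \eqref{eq:Pk}. Write $\mu=\lambda_k$ and $Q_k(s):=\tau s^3+s^2+b\mu s+\mu$, so that $P_k(s)=(s^2+\mu)Q_k(s)-\alpha^2(1+\tau s)$. Since $s_0$ is a root of $s^2+\mu$, the root of $P_k$ near $s_0$ should be a small perturbation $s=s_0+\delta$, with $\delta$ determined at leading order by the first-order Taylor expansion.

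\textbf{Step 1: leading-order quantities.} I will compute the relevant values exactly:
\[
s^2+\mu=2\sigma i\sqrt{\mu}\,\delta+\delta^2,\qquad
Q_k(s_0)=\sigma i(b-\tau)\mu^{3/2},\qquad
Q_k'(s_0)=(b-3\tau)\mu+2\sigma i\sqrt{\mu}.
\]
The key point is that $Q_k(s_0)$ does not vanish, precisely because $b\neq\tau$ (the speed mismatch). Hence
\[
(s^2+\mu)Q_k(s)=-2(b-\tau)\mu^2\delta+R(\delta),\qquad |R(\delta)|\le C\mu^{3/2}|\delta|^2
\]
for $|\delta|\le 1$. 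The equation $P_k(s_0+\delta)=0$ then becomes the fixed-point problem
\[
\delta=\Phi(\delta):=\frac{-\alpha^2\bigl(1+\tau s_0+\tau\delta\bigr)+R(\delta)}{2(b-\tau)\mu^2}.
\]

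\textbf{Step 2: existence and localization of the root.} Let
\[
\delta_*:=-\frac{\alpha^2(1+\sigma i\tau\sqrt\mu)}{2(b-\tau)\mu^2}.
\]
Then $|\delta_*|\sim\mu^{-3/2}$, and $\delta_*$ already reproduces the two displayed correction terms in \eqref{eq:wave-branch-expansion}. I will work on the disk $D=\{|\delta-\delta_*|\le \mu^{-5/2}\}$. On $D$ one has $|\delta|\lesssim\mu^{-3/2}$, so
\[
|\Phi(\delta)-\delta_*|\le \frac{C\bigl(|\delta|+\mu^{3/2}|\delta|^2\bigr)}{\mu^2}\lesssim \mu^{-7/2}\ll\mu^{-5/2}.
\]
Likewise $\Phi$ is a contraction on $D$, since $|\Phi'|\lesssim \mu^{-2}\bigl(1+\mu^{3/2}|\delta|\bigr)\lesssim\mu^{-2}$. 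Banach's fixed-point theorem (or equivalently Rouch\'e's theorem, comparing $P_k(s_0+\delta)$ with its linear part $-2(b-\tau)\mu^2(\delta-\delta_*)$ on $\partial D$) gives a unique root $s_{k,\sigma}=s_0+\delta$ with $\delta-\delta_*=O(\mu^{-5/2})$. This is \eqref{eq:wave-branch-expansion}. Taking real parts gives \eqref{eq:wave-branch-real-part}, and substituting $b-\tau=\tau(c_{\mathrm{mgt}}^2-c_w^2)$ gives \eqref{eq:wave-speed-gap}. Finally, since $\Re s_{k,\sigma}\to0^-$ along a sequence of actual eigenvalues of the linearized generator (the modal solutions are genuine solutions of \eqref{eq:linearized}), there is no uniform spectral gap.

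\textbf{Main obstacle.} The only delicate point is bookkeeping the remainder sizes uniformly in $k$. The real part of the root is of order $\mu^{-2}$, which is much smaller than the imaginary correction of order $\mu^{-3/2}$. The error must therefore be shown to be $o(\mu^{-2})$, not merely $o(\mu^{-3/2})$. This works because the nonlinear remainder $R$ and the $\tau\delta$ term in the numerator contribute only $O(\mu^{-7/2})$, which is well inside the claimed $O(\mu^{-5/2})$. I would first verify explicitly the bound $|R(\delta)|\le C\mu^{3/2}|\delta|^2$, including the cubic term $\delta^2 Q_k'(s_0)\delta$, on $|\delta|\le1$, since every subsequent estimate depends on it.
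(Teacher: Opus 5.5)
Your proposal is correct and follows essentially the same route as the paper. Both arguments make a first-order (Newton) correction at the uncoupled frequency $\sigma i\sqrt{\lambda_k}$, where your linear coefficient $-2(b-\tau)\lambda_k^2$ is the paper's $P_k'(\sigma i\sqrt{\lambda_k})$ up to the harmless $-\alpha^2\tau$, and then locate an actual root by a Rouch\'e/contraction argument on a disk of radius $\lambda_k^{-5/2}$; your explicit bookkeeping is in fact sharper than needed, since it shows $\Phi$ maps that disk into one of radius $O(\lambda_k^{-7/2})$.
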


\begin{proof}
Set $\omega_k:=\sqrt{\lambda_k}$ and fix $\sigma\in\{+1,-1\}$. We seek a root of $P_k$ near the uncoupled wave value $\sigma i\omega_k$. A direct evaluation of \eqref{eq:Pk} gives
\[
P_k(\sigma i\omega_k)=-\alpha^2(1+\sigma i\tau\omega_k),
\]
while differentiation yields
\[
P_k'(\sigma i\omega_k)=-2(b-\tau)\omega_k^4-\alpha^2\tau,
\qquad
P_k''(\sigma i\omega_k)=O(\omega_k^3)
\quad\text{as }k\to\infty.
\]
Since $P_k'(\sigma i\omega_k)\neq 0$ for large $k$, a one-step Newton correction gives
\[
\delta_{k,\sigma}
:=
-\frac{P_k(\sigma i\omega_k)}{P_k'(\sigma i\omega_k)}
=
-\frac{\alpha^2}{2(b-\tau)\omega_k^4}
-\sigma i\,\frac{\alpha^2\tau}{2(b-\tau)\omega_k^3}
+O(\omega_k^{-5}).
\]
Because $P_k''(\sigma i\omega_k)=O(\omega_k^3)$ and $\delta_{k,\sigma}=O(\omega_k^{-3})$, the Taylor remainder is of higher order than the terms retained above. A standard perturbation argument for simple roots (equivalently, one more Newton step or Rouch\'e's theorem on a circle centered at $\sigma i\omega_k+\delta_{k,\sigma}$ of radius $c\omega_k^{-5}$) yields an actual root $s_{k,\sigma}$ satisfying
\[
s_{k,\sigma}
=
\sigma i\omega_k+\delta_{k,\sigma}+O(\omega_k^{-5}).
\]
Since $\omega_k^2=\lambda_k$, this is exactly \eqref{eq:wave-branch-expansion}, and \eqref{eq:wave-branch-real-part} follows immediately by taking real parts. Finally,
\[
b-\tau=\tau\left(\frac{b}{\tau}-1\right)=\tau(c_{\mathrm{mgt}}^2-c_w^2),
\]
which gives \eqref{eq:wave-speed-gap}.
\end{proof}

\begin{remark}
\label{rem:resonant-caution}
Proposition~\ref{prop:modal-obstruction} is stated for fixed $b>\tau$. The expansion is not uniform as $b\downarrow\tau$, because the denominator $b-\tau$ signals a resonant transition in which the wave branch and the high-frequency MGT branch interact on a different scale. The proposition is used here as a quantitative structural explanation of the non-resonant speed-mismatch regime, not as a uniform statement up to the threshold $b=\tau$.
\end{remark}

\section{Coupled potential well}

The exact energy identity \eqref{eq:energy_def} reveals that the natural static variables for the potential well construction are $(u, w)$ rather than $(u, v)$.

\begin{definition}\label{def:potential_well}
Following the classical potential-well framework of Payne--Sattinger \cite{Payne, Sattinger}, for $(u,w)\in H_{0}^{1}(\Omega)\times H_{0}^{1}(\Omega)$ define
\begin{equation*}
Q_{\alpha}(u,w):=\norm{\nabla u}_{2}^{2}+\norm{\nabla w}_{2}^{2}+2\alpha(u,w),
\end{equation*}
and
\begin{align*}
\mathcal{J}_{\alpha}(u,w) &:= \frac{1}{2}Q_{\alpha}(u,w)-\frac{1}{\gamma}\int_{\Omega}|u|^{\gamma}\ln|u|dx+\frac{1}{\gamma^{2}}\norm{u}_{\gamma}^{\gamma}, \\
\mathcal{I}_{\alpha}(u,w) &:= Q_{\alpha}(u,w)-\int_{\Omega}|u|^{\gamma}\ln|u|dx.
\end{align*}
We also set
\begin{align*}
\mathcal{N}_{\alpha} &:= \{(u,w)\ne(0,0):\mathcal{I}_{\alpha}(u,w)=0\}, \\
d_{\alpha} &:= \inf_{(u,w)\in\mathcal{N}_{\alpha}}\mathcal{J}_{\alpha}(u,w).
\end{align*}
Finally, we define the stable set by
\begin{equation*}
\mathcal{W}_{\alpha}:=\{(u,w):\mathcal{J}_{\alpha}(u,w)<d_{\alpha}, \; \mathcal{I}_{\alpha}(u,w)>0\}\cup\{(0,0)\}.
\end{equation*}
\end{definition}

\begin{lemma}[Coercivity of $Q_{\alpha}$]\label{lem:coercivity}
Under \eqref{eq:params},
\begin{equation*}
Q_{\alpha}(u,w)\ge\Bigl(1-\frac{|\alpha|}{\lambda_{1}}\Bigr)\Bigl(\norm{\nabla u}_{2}^{2}+\norm{\nabla w}_{2}^{2}\Bigr)
\end{equation*}
for all $(u,w)\in H_{0}^{1}(\Omega)\times H_{0}^{1}(\Omega)$.
\end{lemma}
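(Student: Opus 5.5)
The plan is to bound the indefinite cross term $2\alpha(u,w)$ from below by a fraction of the Dirichlet energies, using Cauchy--Schwarz, Young's inequality and the Poincar\'e inequality. Because the problem is posed with homogeneous Dirichlet conditions on a bounded domain, the variational characterization of $\lambda_1$ gives
\begin{equation*}
\lambda_{1}\norm{\phi}_{2}^{2}\le\norm{\nabla\phi}_{2}^{2}\qquad\text{for all }\phi\in H_{0}^{1}(\Omega).
\end{equation*}
This is the only analytic input needed.

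First, estimate the cross term. By Cauchy--Schwarz and then $2ab\le a^{2}+b^{2}$,
\begin{equation*}
2|\alpha|\,|(u,w)|\le 2|\alpha|\norm{u}_{2}\norm{w}_{2}\le|\alpha|\bigl(\norm{u}_{2}^{2}+\norm{w}_{2}^{2}\bigr).
\end{equation*}
Applying the Poincar\'e inequality to $u$ and $w$ separately gives
\begin{equation*}
2|\alpha|\,|(u,w)|\le\frac{|\alpha|}{\lambda_{1}}\bigl(\norm{\nabla u}_{2}^{2}+\norm{\nabla w}_{2}^{2}\bigr).
\end{equation*}

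Second, insert this into the definition of $Q_{\alpha}$, using $2\alpha(u,w)\ge-2|\alpha|\,|(u,w)|$:
\begin{equation*}
Q_{\alpha}(u,w)\ge\norm{\nabla u}_{2}^{2}+\norm{\nabla w}_{2}^{2}-\frac{|\alpha|}{\lambda_{1}}\bigl(\norm{\nabla u}_{2}^{2}+\norm{\nabla w}_{2}^{2}\bigr).
\end{equation*}
This is exactly the claimed inequality. The hypothesis $|\alpha|<\lambda_1$ from \eqref{eq:params} is used only at the end, to ensure the constant $1-|\alpha|/\lambda_1$ is strictly positive, which is what makes the bound a genuine coercivity statement.

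I do not expect a real obstacle here; the argument is routine. The one point to get right is the choice of Young's inequality with equal weights. The weight must be symmetric because $u$ and $w$ enter $Q_\alpha$ symmetrically, and an unbalanced split would yield a worse constant.

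For a remark on sharpness, one could test with $u=e_{1}$ and $w=-\operatorname{sgn}(\alpha)e_{1}$. Then $Q_{\alpha}=2\lambda_{1}-2|\alpha|$, which equals the right-hand side exactly. So the constant is optimal, and the threshold $|\alpha|<\lambda_1$ cannot be relaxed.
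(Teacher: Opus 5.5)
Your proof is correct and is essentially the paper's argument: Cauchy--Schwarz and Young with equal weights give $2|\alpha||(u,w)|\le|\alpha|(\norm{u}_2^2+\norm{w}_2^2)$, and the Poincar\'e inequality with constant $\lambda_1$ turns this into $\frac{|\alpha|}{\lambda_1}(\norm{\nabla u}_2^2+\norm{\nabla w}_2^2)$. Your sharpness check with the pair $(e_1,\mp e_1)$, with the sign chosen to match that of $\alpha$, is also correct; it is a useful addition that the paper does not include.
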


\begin{proof}
By Young's inequality, the Poincar\'e inequality, and the definition of $\lambda_{1}$,
\begin{equation*}
2|\alpha||(u,w)|\le|\alpha|(\norm{u}_{2}^{2}+\norm{w}_{2}^{2})\le\frac{|\alpha|}{\lambda_{1}}(\norm{\nabla u}_{2}^{2}+\norm{\nabla w}_{2}^{2}).
\end{equation*}
Hence
\begin{equation*}
Q_{\alpha}(u,w)\ge\Bigl(1-\frac{|\alpha|}{\lambda_{1}}\Bigr)\Bigl(\norm{\nabla u}_{2}^{2}+\norm{\nabla w}_{2}^{2}\Bigr).
\end{equation*}
\end{proof}

\begin{lemma}[Useful algebraic identity]\label{lem:algebraic}
For every $(u,w)\in H_{0}^{1}(\Omega)\times H_{0}^{1}(\Omega)$,
\begin{equation}\label{eq:algebraic}
\mathcal{J}_{\alpha}(u,w)=\frac{\gamma-2}{2\gamma}Q_{\alpha}(u,w)+\frac{1}{\gamma}\mathcal{I}_{\alpha}(u,w)+\frac{1}{\gamma^{2}}\norm{u}_{\gamma}^{\gamma}.
\end{equation}
\end{lemma}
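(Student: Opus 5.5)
The identity is purely algebraic and follows by substituting the definition of $\mathcal{I}_{\alpha}$ into the right-hand side of \eqref{eq:algebraic}. I would first check that every term is finite for $(u,w)\in H_{0}^{1}(\Omega)\times H_{0}^{1}(\Omega)$. The quadratic form $Q_{\alpha}(u,w)$ is finite because the pairing $(u,w)$ is bounded by Cauchy--Schwarz and Poincar\'e. The integrals $\int_{\Omega}|u|^{\gamma}\ln|u|\,dx$ and $\norm{u}_{\gamma}^{\gamma}$ are finite for the following reason. The function $|s|^{\gamma}|\ln|s||$ is bounded near $s=0$, since $\gamma>2$. For large $|s|$ it is dominated by $C_\mu(1+|s|^{\gamma+\mu})$ for every $\mu>0$. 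Because $\gamma<\frac{2(n-1)}{n-2}<\frac{2n}{n-2}$, we can choose $\mu$ so that $\gamma+\mu\le \frac{2n}{n-2}$, and the Sobolev embedding $H_0^1\hookrightarrow L^{\gamma+\mu}$ then makes both integrals finite. So all expressions are well defined real numbers.

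Next I would expand the right-hand side of \eqref{eq:algebraic} using $\mathcal{I}_{\alpha}(u,w)=Q_{\alpha}(u,w)-\int_{\Omega}|u|^{\gamma}\ln|u|\,dx$. The right-hand side becomes
\begin{equation*}
\frac{\gamma-2}{2\gamma}Q_{\alpha}(u,w)+\frac{1}{\gamma}Q_{\alpha}(u,w)-\frac{1}{\gamma}\int_{\Omega}|u|^{\gamma}\ln|u|\,dx+\frac{1}{\gamma^{2}}\norm{u}_{\gamma}^{\gamma}.
\end{equation*}
The two $Q_{\alpha}$ coefficients combine as
\begin{equation*}
\frac{\gamma-2}{2\gamma}+\frac{2}{2\gamma}=\frac{1}{2}.
\end{equation*}
The expression is therefore exactly $\frac12 Q_{\alpha}(u,w)-\frac1\gamma\int_{\Omega}|u|^\gamma\ln|u|\,dx+\frac{1}{\gamma^2}\norm{u}_\gamma^\gamma$, which is $\mathcal{J}_{\alpha}(u,w)$ by Definition \ref{def:potential_well}.

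There is no real obstacle. The only point needing care is the integrability justification in the first step, so that the rearrangement takes place among finite quantities; the subcritical restriction in \eqref{eq:params} handles this with room to spare. I would also note for later use that, by Lemma \ref{lem:coercivity}, the first term on the right of \eqref{eq:algebraic} is nonnegative and the last term is nonnegative. Hence on $\mathcal{N}_{\alpha}$, and on the set where $\mathcal{I}_{\alpha}>0$, $\mathcal{J}_{\alpha}$ controls $\norm{\nabla u}_{2}^{2}+\norm{\nabla w}_{2}^{2}$. This is how the identity will be used for the well depth.
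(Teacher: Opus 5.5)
Your proposal is correct and matches the paper's proof, which simply expands the right-hand side using $\mathcal{I}_{\alpha}=Q_{\alpha}-\int_{\Omega}|u|^{\gamma}\ln|u|\,dx$ and collects the $Q_{\alpha}$ coefficients, $\frac{\gamma-2}{2\gamma}+\frac{1}{\gamma}=\frac12$. Your integrability check via the subcritical Sobolev embedding is a harmless addition that the paper leaves implicit.
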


\begin{proof}
Expand the right-hand side and collect the terms involving $Q_{\alpha}$ and $\int_{\Omega}|u|^{\gamma}\ln|u|dx$.
\end{proof}

\begin{lemma}[Logarithmic growth with $\epsilon$-absorption]\label{lem:log_growth}
Choose $\eta>0$ so that $\gamma+\eta<2^{*}:=\frac{2n}{n-2}$. Then, for every $\epsilon>0$, there exists $C_{\epsilon}>0$ such that
\begin{equation}\label{eq:log_est1}
|s|^{\gamma}|\ln|s||\le\epsilon|s|^{2}+C_{\epsilon}|s|^{\gamma+\eta} \quad \text{for all } s\in\mathbb{R}.
\end{equation}
Consequently,
\begin{equation}\label{eq:log_est2}
\int_{\Omega}|u|^{\gamma}|\ln|u||dx\le\frac{\epsilon}{\lambda_{1}}\norm{\nabla u}_{2}^{2}+C_{S,\eta}C_{\epsilon}\norm{\nabla u}_{2}^{\gamma+\eta} \quad \text{for all } u\in H_{0}^{1}(\Omega),
\end{equation}
where $C_{S,\eta}$ denotes the Sobolev embedding constant associated with $H_{0}^{1}(\Omega)\hookrightarrow L^{\gamma+\eta}(\Omega)$. In particular,
\begin{equation}\label{eq:log_est3}
|f(s)|\le\epsilon|s|+C_{\epsilon}|s|^{\gamma-1+\eta} \quad \text{for all } s\in\mathbb{R},
\end{equation}
and, possibly after increasing $C_{\epsilon}$,
\begin{equation}\label{eq:log_est4}
|F(s)|\le\epsilon|s|^{2}+C_{\epsilon}|s|^{\gamma+\eta} \quad \text{for all } s\in\mathbb{R}.
\end{equation}
\end{lemma}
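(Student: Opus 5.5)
The pointwise estimate \eqref{eq:log_est1} comes first, by splitting $\mathbb{R}$ into the regions $|s|\le\delta$, $\delta<|s|<R$ and $|s|\ge R$; everything else follows from it.

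\emph{Small $|s|$.} Write $|s|^{\gamma}|\ln|s||=|s|^{2}\cdot|s|^{\gamma-2}|\ln|s||$. Since $\gamma>2$, the factor $|s|^{\gamma-2}|\ln|s||$ tends to $0$ as $s\to0$. So there is $\delta=\delta(\epsilon)\in(0,1)$ such that this factor is at most $\epsilon$ for $|s|\le\delta$. This gives $|s|^{\gamma}|\ln|s||\le\epsilon|s|^2$ there. This is the only place where $\epsilon$ enters, and it is the step that absorbs the singular behaviour of the logarithm at the origin.

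\emph{Large $|s|$.} For $|s|\ge 1$ use the elementary bound $\ln|s|\le \frac{1}{e\eta}|s|^{\eta}$. This yields $|s|^\gamma|\ln|s||\le \frac{1}{e\eta}|s|^{\gamma+\eta}$.

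\emph{Intermediate range $\delta<|s|<1$.} Here $|s|^{\gamma}|\ln|s||\le |s|^{\gamma}\ln(1/\delta)$. Because $|s|>\delta$, we have $|s|^{\gamma}\le \delta^{-\eta}|s|^{\gamma+\eta}$, so the term is bounded by $\delta^{-\eta}\ln(1/\delta)\,|s|^{\gamma+\eta}$.

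Taking $C_\epsilon:=\max\{\delta^{-\eta}\ln(1/\delta),\,(e\eta)^{-1}\}$ proves \eqref{eq:log_est1}. At $s=0$ the inequality holds trivially under the convention $0\cdot\ln 0=0$.

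\emph{Integrated estimate \eqref{eq:log_est2}.} Integrate \eqref{eq:log_est1} over $\Omega$. Bound $\epsilon\|u\|_2^2$ by $\frac{\epsilon}{\lambda_1}\|\nabla u\|_2^2$ using Poincar\'e. Bound $\|u\|_{\gamma+\eta}^{\gamma+\eta}$ by $C_{S,\eta}\|\nabla u\|_2^{\gamma+\eta}$ using the Sobolev embedding $H^1_0\hookrightarrow L^{\gamma+\eta}$, which is valid since $\gamma+\eta<2^*$. Such an $\eta>0$ exists because $\gamma<\frac{2(n-1)}{n-2}<2^*$. Here $C_{S,\eta}$ is understood as the embedding constant raised to the power $\gamma+\eta$.

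\emph{Bound on $f$, \eqref{eq:log_est3}.} Note that $|f(s)|=|s|^{\gamma-1}|\ln|s||$. For $s\neq0$, divide \eqref{eq:log_est1} by $|s|$; the case $s=0$ is trivial.

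\emph{Bound on $F$, \eqref{eq:log_est4}.} Use the explicit formula, which gives
\[
|F(s)|\le\frac1\gamma|s|^\gamma|\ln|s||+\frac1{\gamma^2}|s|^\gamma .
\]
The first term is controlled by \eqref{eq:log_est1}, applied with $\epsilon$ replaced by $\epsilon/2$ after multiplying through by $1/\gamma\le1$.

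The term $|s|^\gamma$ has no logarithm to exploit, so it must be handled separately by the same splitting.
\begin{itemize}
\item For $|s|\le\delta'$: $|s|^\gamma\le\delta'^{\gamma-2}|s|^2\le\frac{\epsilon\gamma^2}{2}|s|^2$, after choosing $\delta'$ small, which is possible since $\gamma>2$.
\item For $|s|>\delta'$: $|s|^\gamma\le\delta'^{-\eta}|s|^{\gamma+\eta}$.
\end{itemize}

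Summing the two contributions and enlarging $C_\epsilon$ gives \eqref{eq:log_est4}.

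The only delicate point is bookkeeping: every $\epsilon$-dependence must sit in $C_\epsilon$, and the quadratic coefficient must remain exactly $\epsilon$ as stated. No genuine obstacle is expected.
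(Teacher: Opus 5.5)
Your proof is correct and follows the paper's argument: the same split into $|s|\le\delta$ and $|s|>\delta$ (you only make $C_\epsilon$ explicit by further separating $\delta<|s|<1$ from $|s|\ge 1$), then Poincar\'e and Sobolev for the integrated bound, and division by $|s|$ for $f$. Your separate splitting for the non-logarithmic term $\frac{1}{\gamma^{2}}|s|^{\gamma}$ in $F$ is a useful addition: that term cannot be bounded by $|s|^{\gamma}|\ln|s||$ near $|s|=1$, and the paper's one-line appeal to \eqref{eq:log_est1} for \eqref{eq:log_est4} passes over this point.
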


\begin{proof}
Fix $\epsilon>0$. Since $\gamma>2$, we have
\begin{equation*}
\lim_{s\rightarrow 0}|s|^{\gamma-2}|\ln|s||=0.
\end{equation*}
Hence there exists $\delta\in(0,1)$ such that
\begin{equation*}
|s|^{\gamma-2}|\ln|s||\le\epsilon \quad \text{for all } 0<|s|\le\delta,
\end{equation*}
which implies
\begin{equation*}
|s|^{\gamma}|\ln|s||\le\epsilon|s|^{2} \quad \text{for all } |s|\le\delta.
\end{equation*}
On the complementary region $|s|>\delta$, the logarithm is bounded on $[\delta, 1]$ and satisfies the standard growth estimate $|\ln|s||\le C_{\delta,\eta}|s|^{\eta}$ for $|s|\ge 1$. Therefore there exists $C_{\epsilon}>0$ such that
\begin{equation*}
|s|^{\gamma}|\ln|s||\le C_{\epsilon}|s|^{\gamma+\eta} \quad \text{for all } |s|>\delta.
\end{equation*}
Combining the two regions yields \eqref{eq:log_est1}. Integrating over $\Omega$, applying Poincar\'e's inequality to the quadratic term and the Sobolev embedding to the $L^{\gamma+\eta}$ term, gives \eqref{eq:log_est2}. Dividing \eqref{eq:log_est1} by $|s|$ for $s\ne 0$ and setting the value at $s=0$ by continuity yields \eqref{eq:log_est3}. Finally, \eqref{eq:log_est4} follows from the explicit formula for $F$ together with \eqref{eq:log_est1}, after possibly increasing $C_{\epsilon}$.
\end{proof}

\begin{lemma}[Positivity of the well depth]\label{lem:positivity}
The set $\mathcal{N}_{\alpha}$ is nonempty and
\begin{equation*}
d_{\alpha}>0.
\end{equation*}
\end{lemma}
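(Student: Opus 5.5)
For nonemptiness of $\mathcal{N}_{\alpha}$ I would use a fibering argument along rays $\lambda\mapsto(\lambda u,\lambda w)$. Fix $u\in H_{0}^{1}(\Omega)$ with $u\not\equiv0$ and $w\in H_0^1(\Omega)$ (for instance $w=0$). Since $\ln|\lambda u|=\ln\lambda+\ln|u|$,
\begin{equation*}
\mathcal{I}_{\alpha}(\lambda u,\lambda w)=\lambda^{2}Q_{\alpha}(u,w)-\lambda^{\gamma}\int_{\Omega}|u|^{\gamma}\ln|u|\,dx-\lambda^{\gamma}\ln\lambda\,\norm{u}_{\gamma}^{\gamma}.
\end{equation*}
Dividing by $\lambda^{2}$ gives $g(\lambda)=Q_{\alpha}(u,w)-\lambda^{\gamma-2}\bigl(\int_\Omega|u|^\gamma\ln|u|\,dx+\ln\lambda\,\norm{u}_\gamma^\gamma\bigr)$. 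As $\lambda\to0^{+}$, $\lambda^{\gamma-2}\ln\lambda\to0$ because $\gamma>2$, so $g(\lambda)\to Q_{\alpha}(u,w)>0$ by Lemma~\ref{lem:coercivity}. As $\lambda\to\infty$, the term $\lambda^{\gamma-2}\ln\lambda\,\norm{u}_{\gamma}^{\gamma}$ dominates, so $g(\lambda)\to-\infty$. By continuity there is $\lambda^{*}>0$ with $\mathcal{I}_{\alpha}(\lambda^{*}u,\lambda^{*}w)=0$, and $(\lambda^* u,\lambda^* w)\neq(0,0)$, so $\mathcal{N}_{\alpha}\neq\emptyset$. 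All integrals are finite by Lemma~\ref{lem:log_growth}.

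For positivity, take $(u,w)\in\mathcal{N}_{\alpha}$ and write $c_{\alpha}=1-|\alpha|/\lambda_{1}>0$. First I would observe that $u\neq0$: if $u=0$, then $\mathcal{I}_\alpha=\norm{\nabla w}_2^2=0$, forcing $w=0$, which is excluded. Using $\mathcal{I}_{\alpha}=0$, Lemma~\ref{lem:coercivity}, and \eqref{eq:log_est2},
\begin{equation*}
c_{\alpha}\bigl(\norm{\nabla u}_{2}^{2}+\norm{\nabla w}_{2}^{2}\bigr)\le Q_{\alpha}(u,w)=\int_{\Omega}|u|^{\gamma}\ln|u|\,dx\le\frac{\epsilon}{\lambda_{1}}\norm{\nabla u}_{2}^{2}+C_{S,\eta}C_{\epsilon}\norm{\nabla u}_{2}^{\gamma+\eta}.
\end{equation*}
Choosing $\epsilon=c_{\alpha}\lambda_{1}/2$ absorbs the quadratic term and leaves $\tfrac{c_{\alpha}}{2}\norm{\nabla u}_{2}^{2}\le C\norm{\nabla u}_{2}^{\gamma+\eta}$. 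Since $\norm{\nabla u}_2>0$, this gives the uniform lower bound $\norm{\nabla u}_{2}\ge r_{0}:=\bigl(c_{\alpha}/(2C)\bigr)^{1/(\gamma+\eta-2)}>0$, independent of $(u,w)$.

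Finally, Lemma~\ref{lem:algebraic} on $\mathcal{N}_{\alpha}$ gives
\begin{equation*}
\mathcal{J}_{\alpha}(u,w)=\frac{\gamma-2}{2\gamma}Q_{\alpha}(u,w)+\frac{1}{\gamma^{2}}\norm{u}_{\gamma}^{\gamma}\ge\frac{\gamma-2}{2\gamma}\,c_{\alpha}\,r_{0}^{2}.
\end{equation*}
Taking the infimum over $\mathcal{N}_{\alpha}$ yields $d_{\alpha}>0$.

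The main obstacle is the sign-changing logarithm: $\int_{\Omega}|u|^{\gamma}\ln|u|\,dx$ may be negative, which blocks a naive superlinearity argument. The $\epsilon$-absorption estimate \eqref{eq:log_est2} handles this in the lower bound, and the $\ln\lambda$ term handles it along the ray.
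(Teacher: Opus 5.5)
Your proposal is correct and follows the paper's proof essentially step for step. Nonemptiness of $\mathcal{N}_\alpha$ comes from the same scaling argument along $\lambda\mapsto(\lambda u,\lambda w)$ (the paper takes $w=0$), and $d_\alpha>0$ comes from the same $\epsilon$-absorption via \eqref{eq:log_est2} together with Lemma~\ref{lem:algebraic}. The only cosmetic difference is that you first check $u\neq 0$ on $\mathcal{N}_\alpha$, extract a uniform lower bound on $\norm{\nabla u}_2$, and then invoke coercivity; the paper instead converts everything into $\frac12 Q_\alpha\le\tilde C_\epsilon Q_\alpha^{(\gamma+\eta)/2}$ and bounds $Q_\alpha$ from below directly.
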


\begin{proof}
Fix $\varphi\in H_{0}^{1}(\Omega)\setminus\{0\}$ and set $w=0$. By the scaling formula
\begin{equation*}
\mathcal{I}_{\alpha}(\lambda\varphi,0)=\lambda^{2}\norm{\nabla\varphi}_{2}^{2}-\lambda^{\gamma}\int_{\Omega}|\varphi|^{\gamma}(\ln\lambda+\ln|\varphi|)dx,
\end{equation*}
we have $\mathcal{I}_{\alpha}(\lambda\varphi,0)>0$ for $\lambda>0$ sufficiently small, while $\mathcal{I}_{\alpha}(\lambda\varphi,0)<0$ for $\lambda$ sufficiently large. Hence, by continuity, $\mathcal{N}_{\alpha}\ne\emptyset$.

Set
\begin{equation*}
c_{\alpha}:=1-\frac{|\alpha|}{\lambda_{1}}>0.
\end{equation*}
Now let $(u,w)\in\mathcal{N}_{\alpha}$. Then $\mathcal{I}_{\alpha}(u,w)=0$, so
\begin{equation*}
Q_{\alpha}(u,w)=\int_{\Omega}|u|^{\gamma}\ln|u|dx\le\int_{\Omega}|u|^{\gamma}|\ln|u||dx.
\end{equation*}
By Lemma \ref{lem:coercivity},
\begin{equation*}
\norm{\nabla u}_{2}^{2}\le c_{\alpha}^{-1}Q_{\alpha}(u,w).
\end{equation*}
Choose $\epsilon>0$ sufficiently small so that
\begin{equation*}
\frac{\epsilon}{\lambda_{1}c_{\alpha}}\le\frac{1}{2}.
\end{equation*}
Then Lemma \ref{lem:log_growth} gives
\begin{equation*}
Q_{\alpha}(u,w)\le\frac{\epsilon}{\lambda_{1}}\norm{\nabla u}_{2}^{2}+C_{S,\eta}C_{\epsilon}\norm{\nabla u}_{2}^{\gamma+\eta}\le\frac{1}{2}Q_{\alpha}(u,w)+\tilde{C}_{\epsilon}Q_{\alpha}(u,w)^{(\gamma+\eta)/2},
\end{equation*}
where $\tilde{C}_{\epsilon}:=C_{S,\eta}C_{\epsilon}c_{\alpha}^{-(\gamma+\eta)/2}$. Subtracting $\frac{1}{2}Q_{\alpha}(u,w)$ from both sides, we obtain
\begin{equation*}
\frac{1}{2}Q_{\alpha}(u,w)\le\tilde{C}_{\epsilon}Q_{\alpha}(u,w)^{(\gamma+\eta)/2}.
\end{equation*}
Since $(u,w)\ne(0,0)$ implies $Q_{\alpha}(u,w)>0$ and since $\gamma+\eta>2$, this yields
\begin{equation*}
Q_{\alpha}(u,w)\ge c_{0}>0 \quad \text{for all } (u,w)\in\mathcal{N}_{\alpha},
\end{equation*}
for some constant $c_{0}$ depending only on the parameters. Using \eqref{eq:algebraic} and the relation $\mathcal{I}_{\alpha}=0$ on $\mathcal{N}_{\alpha}$, we conclude that
\begin{equation*}
\mathcal{J}_{\alpha}(u,w)=\frac{\gamma-2}{2\gamma}Q_{\alpha}(u,w)+\frac{1}{\gamma^{2}}\norm{u}_{\gamma}^{\gamma}\ge\frac{\gamma-2}{2\gamma}c_{0}.
\end{equation*}
Taking the infimum over $\mathcal{N}_{\alpha}$ proves $d_{\alpha}>0$.
\end{proof}

\begin{lemma}[Local positivity of the Nehari functional near the origin]
\label{lem:local_positive}
There exists $\rho_0>0$ such that, whenever $(u,w)\in H_0^1(\Omega)\times H_0^1(\Omega)$ satisfies
\[
Q_\alpha(u,w)\le \rho_0,
\]
one has
\[
\mathcal I_\alpha(u,w)\ge \frac14 Q_\alpha(u,w).
\]
In particular, if $(u,w)\neq (0,0)$ and $Q_\alpha(u,w)\le \rho_0$, then $\mathcal I_\alpha(u,w)>0$.
\end{lemma}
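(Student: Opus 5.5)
The plan is to repeat the absorption argument from the proof of Lemma~\ref{lem:positivity}, now used as an upper bound rather than on the Nehari manifold. Set $c_\alpha:=1-|\alpha|/\lambda_1>0$ as before. Lemma~\ref{lem:coercivity} gives
\[
\norm{\nabla u}_2^2\le c_\alpha^{-1}Q_\alpha(u,w).
\]
Since
\[
\int_\Omega|u|^\gamma\ln|u|\,dx\le\int_\Omega|u|^\gamma|\ln|u||\,dx,
\]
it suffices to bound the right-hand side by $\frac34Q_\alpha(u,w)$ when $Q_\alpha$ is small. Then
\[
\mathcal I_\alpha(u,w)\ge Q_\alpha-\tfrac34 Q_\alpha=\tfrac14Q_\alpha.
\]

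First I fix $\epsilon>0$ with $\epsilon/(\lambda_1c_\alpha)\le\frac12$. I then apply \eqref{eq:log_est2} from Lemma~\ref{lem:log_growth} together with the coercivity bound, which yields
\[
\int_\Omega|u|^\gamma|\ln|u||\,dx\le\frac{\epsilon}{\lambda_1}\norm{\nabla u}_2^2+C_{S,\eta}C_\epsilon\norm{\nabla u}_2^{\gamma+\eta}\le\frac12Q_\alpha(u,w)+\tilde C_\epsilon\,Q_\alpha(u,w)^{(\gamma+\eta)/2}.
\]
Here $\tilde C_\epsilon=C_{S,\eta}C_\epsilon c_\alpha^{-(\gamma+\eta)/2}$, exactly as in the proof of Lemma~\ref{lem:positivity}. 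Because $\gamma+\eta>2$, I can write
\[
\tilde C_\epsilon Q_\alpha^{(\gamma+\eta)/2}=\tilde C_\epsilon Q_\alpha^{(\gamma+\eta-2)/2}\,Q_\alpha.
\]
I choose $\rho_0>0$ so that $\tilde C_\epsilon\rho_0^{(\gamma+\eta-2)/2}\le\frac14$, that is,
\[
\rho_0:=(4\tilde C_\epsilon)^{-2/(\gamma+\eta-2)}.
\]
Then $Q_\alpha\le\rho_0$ gives $\tilde C_\epsilon Q_\alpha^{(\gamma+\eta)/2}\le\frac14Q_\alpha$. Note that $Q_\alpha\ge0$ by Lemma~\ref{lem:coercivity}, so raising it to fractional powers is legitimate. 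Combining the two bounds gives $\int|u|^\gamma|\ln|u||\le\frac34Q_\alpha$, and hence $\mathcal I_\alpha\ge\frac14Q_\alpha$.

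For the final assertion: if $(u,w)\ne(0,0)$, Lemma~\ref{lem:coercivity} gives $Q_\alpha(u,w)\ge c_\alpha(\norm{\nabla u}_2^2+\norm{\nabla w}_2^2)>0$, because $\nabla$ is injective on $H_0^1$. So $\mathcal I_\alpha>0$.

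There is no real obstacle here. The only point needing care is that the constant in \eqref{eq:log_est2} must be fixed before $\rho_0$ is chosen, so the order of choices has to be $\epsilon$, then $C_\epsilon$, then $\rho_0$. This ensures $\rho_0$ depends only on $\alpha,\lambda_1,\gamma,\eta,\Omega$. One should also check that all integrals are finite for $u\in H_0^1$, which holds because $\gamma+\eta<2^*$ makes the Sobolev embedding applicable.
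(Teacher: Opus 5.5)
Your proof is correct and matches the paper's argument: you use coercivity to absorb the quadratic part of \eqref{eq:log_est2}, then choose $\rho_0$ so that the superquadratic remainder $\tilde C_\epsilon Q_\alpha^{(\gamma+\eta)/2}$ is small. The only difference is how you split the constants: you take $\frac12$ from the $\epsilon$-term and $\frac14$ from the remainder, whereas the paper takes $\frac14$ and $\frac12$. This does not affect the argument.
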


\begin{proof}
Set $c_\alpha:=1-\frac{|\alpha|}{\lambda_1}>0$. Choose $\varepsilon>0$ so small that
\[
\frac{\varepsilon}{\lambda_1 c_\alpha}\le \frac14.
\]
By Lemma~\ref{lem:log_growth} and Lemma~\ref{lem:coercivity},
\[
\int_\Omega |u|^\gamma|\ln|u||\,dx
\le
\frac{\varepsilon}{\lambda_1}\norm{\grad u}_2^2 + C_{S,\eta}C_\varepsilon \norm{\grad u}_2^{\gamma+\eta}
\le
\frac14 Q_\alpha(u,w)+\widetilde C_\varepsilon Q_\alpha(u,w)^{(\gamma+\eta)/2},
\]
where $\widetilde C_\varepsilon:=C_{S,\eta}C_\varepsilon c_\alpha^{-(\gamma+\eta)/2}$. Therefore
\[
\mathcal I_\alpha(u,w)
=Q_\alpha(u,w)-\int_\Omega |u|^\gamma\ln|u|\,dx
\ge \frac34 Q_\alpha(u,w)-\widetilde C_\varepsilon Q_\alpha(u,w)^{(\gamma+\eta)/2}.
\]
Since $\frac{\gamma+\eta}{2}>1$, we can choose $\rho_0>0$ so small that
\[
\widetilde C_\varepsilon Q^{(\gamma+\eta)/2-1}\le \frac12
\qquad\text{for all }0<Q\le \rho_0.
\]
For such $Q=Q_\alpha(u,w)$, the previous inequality yields
\[
\mathcal I_\alpha(u,w)\ge \frac14 Q_\alpha(u,w).
\]
\end{proof}

\section{Global well-posedness in the stable set}

We first fix the natural phase space
\begin{equation*}
\mathcal{H}=H_{0}^{1}(\Omega)\times L^{2}(\Omega)\times H_{0}^{1}(\Omega)\times L^{2}(\Omega)\times H_{0}^{1}(\Omega),
\end{equation*}
with state variable $Y(t)=(u(t),u_{t}(t),w(t),w_{t}(t),v_{t}(t))$.

\begin{definition}[Weak solution]\label{def:weak_sol}
Let $T>0$. A pair $(u, v)$ is called a weak solution of \eqref{eq:main} on $[0, T]$ if, after setting $w=v+\tau v_{t}$, one has
\begin{align*}
u, w &\in L^{\infty}(0,T;H_{0}^{1}(\Omega))\cap C([0,T];L^{2}(\Omega)), \\
u_{t}, w_{t} &\in L^{\infty}(0,T;L^{2}(\Omega))\cap C_{w}([0,T];L^{2}(\Omega)), \\
u_{tt}, w_{tt} &\in L^{\infty}(0,T;H^{-1}(\Omega)), \\
v_{t} &\in L^{\infty}(0,T;H_{0}^{1}(\Omega))\cap C_{w}([0,T];H_{0}^{1}(\Omega)),
\end{align*}
and, for a.e. $t\in(0,T)$,
\begin{align}
\langle u_{tt}(t),\phi\rangle+(\nabla u(t),\nabla\phi)+\alpha(w(t),\phi) &=(f(u(t)),\phi) \quad \forall\phi\in H_{0}^{1}(\Omega), \label{eq:weak1} \\
\langle w_{tt}(t),\psi\rangle+(\nabla w(t),\nabla\psi)+(b-\tau)(\nabla v_{t}(t),\nabla\psi)+\alpha(u(t),\psi) &=0 \quad \forall\psi\in H_{0}^{1}(\Omega). \label{eq:weak2}
\end{align}
The initial conditions are attained as
\begin{equation*}
u(0)=u_{0}, \; w(0)=w_{0} \text{ in } L^{2}(\Omega),
\end{equation*}
and
\begin{equation*}
u_{t}(0)=u_{1}, \; w_{t}(0)=w_{1} \text{ in the weak } L^{2}(\Omega)\text{-sense}.
\end{equation*}
\end{definition}

\begin{remark}[Temporal continuity]\label{rem:continuity}
By the standard Lions-Magenes theorem \cite[Chap. 1]{Lions}, the regularity from Definition \ref{def:weak_sol} implies
\begin{align*}
u, w &\in C_{w}([0,T];H_{0}^{1}(\Omega))\cap C([0,T];L^{2}(\Omega)), \\
u_{t}, w_{t} &\in C_{w}([0,T];L^{2}(\Omega)).
\end{align*}
Interpolating the strong $L^{2}$-continuity with the uniform $H_{0}^{1}$ bound yields
\begin{equation*}
u, w \in C([0,T];L^{q}(\Omega)) \quad \text{for every } 2\le q<2^{*}.
\end{equation*}
In particular, by \eqref{eq:log_est1}, \eqref{eq:log_est3}, and \eqref{eq:log_est4}, the maps
\begin{equation*}
t\mapsto\int_{\Omega}|u(t)|^{\gamma}\ln|u(t)|dx, \quad t\mapsto\norm{u(t)}_{\gamma}^{\gamma}, \quad t\mapsto\int_{\Omega}F(u(t))dx
\end{equation*}
are continuous on $[0,T]$.
\end{remark}

\begin{lemma}[Compactness and nonlinear convergence]\label{lem:compactness}
Let $T>0$ and let $\{u_{m}\}$ be bounded in $L^{\infty}(0,T;H_{0}^{1}(\Omega))\cap W^{1,\infty}(0,T;L^{2}(\Omega))$. Then, up to a subsequence,
\begin{equation*}
u_{m}\rightarrow u \text{ strongly in } C([0,T]; L^{2}(\Omega))
\end{equation*}
and
\begin{equation*}
u_{m}\rightarrow u \text{ strongly in } L^{2}(0,T;L^{q}(\Omega)) \text{ for every } q<2^{*}.
\end{equation*}
Assume, in addition, that the bound \eqref{eq:log_est3} holds for some $\eta>0$ such that
\begin{equation*}
\gamma-1+\eta<\frac{n+2}{n-2}.
\end{equation*}
Then
\begin{equation*}
f(u_{m})\rightarrow f(u) \text{ strongly in } L^{1}((0,T)\times\Omega),
\end{equation*}
and, after extraction,
\begin{equation*}
f(u_m)\rightharpoonup^{*} f(u)
\quad\text{in }L^\infty(0,T;H^{-1}(\Omega)).
\end{equation*}
\end{lemma}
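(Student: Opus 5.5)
The argument has three parts: the two strong convergences come from the Aubin--Lions--Simon compactness theorem, the $L^1$ convergence of $f(u_m)$ from Vitali's theorem, and the weak-$*$ statement from a uniform $H^{-1}$ bound together with uniqueness of limits.

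\textbf{Strong convergence of $u_m$.} Boundedness of $\{u_m\}$ in $L^\infty(0,T;H_0^1(\Omega))$, with $\partial_t u_m$ bounded in $L^\infty(0,T;L^2(\Omega))$, and the compact embedding $H_0^1(\Omega)\hookrightarrow\hookrightarrow L^2(\Omega)$ give, by Simon's version of Aubin--Lions, relative compactness in $C([0,T];L^2(\Omega))$. We extract $u_m\to u$ there. Passing to a further subsequence, the uniform $H_0^1$ bound also gives $u\in L^\infty(0,T;H_0^1)$ and $u_m\rightharpoonup^* u$ in that space.

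For $2\le q<2^*$, write $\frac1q=\frac{\theta}{2}+\frac{1-\theta}{2^*}$ with $\theta\in(0,1]$. The Gagliardo--Nirenberg (H\"older) interpolation and Sobolev give
\[
\norm{u_m-u}_q\le \norm{u_m-u}_2^{\theta}\norm{u_m-u}_{2^*}^{1-\theta}\le C\norm{u_m-u}_2^{\theta}.
\]
Hence $u_m\to u$ in $C([0,T];L^q(\Omega))$, and in particular in $L^2(0,T;L^q)$. For $q<2$ the claim follows from the bounded domain. Along a subsequence we also have $u_m\to u$ a.e. in $(0,T)\times\Omega$.

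\textbf{$L^1$ convergence of $f(u_m)$.} Since $f$ is continuous on $\mathbb{R}$ (with $f(0)=0$, because $\gamma>2$), we get $f(u_m)\to f(u)$ a.e. By Vitali's theorem it suffices to show that $\{f(u_m)\}$ is uniformly integrable. Set $p:=\gamma-1+\eta$ and choose $r>1$ with $pr<2^*$. This is possible because $p<\frac{n+2}{n-2}<2^*$. By \eqref{eq:log_est3},
\[
|f(u_m)|^r\le C\bigl(|u_m|^r+|u_m|^{pr}\bigr),
\]
and Sobolev embedding bounds $\int_0^T\!\int_\Omega|f(u_m)|^r$ uniformly in $m$. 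A family bounded in $L^r$ with $r>1$ on a finite measure space is uniformly integrable, so Vitali yields $f(u_m)\to f(u)$ in $L^1((0,T)\times\Omega)$.

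\textbf{Weak-$*$ convergence in $L^\infty(0,T;H^{-1})$.} Here the hypothesis $p<\frac{n+2}{n-2}$ is used. It is the dual exponent condition: $L^{2^*/p}\hookrightarrow H^{-1}$ requires $p\le (2^*)'^{-1}2^*$, i.e.\ $\frac{p}{2^*}\le 1-\frac1{2^*}$, which is $p\le \frac{n+2}{n-2}$. Then \eqref{eq:log_est3} and Sobolev give
\[
\norm{f(u_m)}_{H^{-1}}\le C\bigl(\norm{u_m}_{2}+\norm{u_m}_{2^*}^{p}\bigr)\le C,
\]
uniformly in $t$ and $m$. By Banach--Alaoglu ($L^\infty(0,T;H^{-1})$ is the dual of $L^1(0,T;H_0^1)$), a subsequence satisfies $f(u_m)\rightharpoonup^*\chi$.

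To identify $\chi$, test against smooth $\varphi\in C_c^\infty((0,T)\times\Omega)$. The $L^1$ convergence gives $\iint f(u_m)\varphi\to\iint f(u)\varphi$, so $\chi=f(u)$ in distributions and hence as elements of $L^\infty(0,T;H^{-1})$. By uniqueness of the limit the whole extracted subsequence converges.

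\textbf{Main obstacle.} I expect the main delicate point to be the exponent bookkeeping: checking that $pr<2^*$ is achievable, and that the $H^{-1}$ bound needs exactly $p\le\frac{n+2}{n-2}$. This is where the restriction on $\eta$ enters. The singular logarithm near $0$ has already been handled by \eqref{eq:log_est3}.
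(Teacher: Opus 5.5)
Your proposal is correct and follows essentially the same route as the paper. That route is Simon's compactness theorem plus interpolation against the uniform $L^{2^*}$ bound, then Vitali's theorem via a uniform $L^r$ bound with $r>1$, and finally the weak-$*$ limit via $L^{2n/(n+2)}(\Omega)\hookrightarrow H^{-1}(\Omega)$, with the limit identified through the $L^1$ convergence. The only cosmetic difference is that the paper uses the single exponent $r=\frac{2n}{n+2}$ for both the uniform integrability and the $H^{-1}$ bound, whereas you take a generic $r>1$ with $pr<2^*$ for the former.
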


\begin{proof}
The compact embedding $H_{0}^{1}(\Omega)\Subset L^{2}(\Omega)$ and Simon's compactness theorem \cite{Simon} yield the strong convergence in $C([0,T];L^{2}(\Omega))$. Interpolating this convergence with the uniform $L^{\infty}(0,T;L^{2^{*}}(\Omega))$ bound implies the strong convergence in $L^{2}(0,T;L^{q}(\Omega))$ for every $q<2^{*}$. In particular, after passing to a subsequence, $u_{m}\rightarrow u$ almost everywhere on $(0,T)\times\Omega$.

Next set $p:=\frac{2n}{n+2}>1$. Since $\gamma-1+\eta\le\frac{n+2}{n-2}$, the Sobolev embedding gives a uniform bound for $u_{m}$ in $L^{\infty}(0,T;L^{p(\gamma-1+\eta)}(\Omega))$. Using \eqref{eq:log_est3}, we infer that
\begin{equation*}
\norm{f(u_{m})}_{L^{\infty}(0,T;L^{p}(\Omega))}\le C,
\end{equation*}
with $C$ independent of $m$. Hence $\{f(u_{m})\}$ is uniformly integrable in $L^{1}((0,T)\times\Omega)$. Since $f(u_{m})\rightarrow f(u)$ almost everywhere, Vitali's theorem yields
\begin{equation*}
f(u_{m})\rightarrow f(u) \text{ strongly in } L^{1}((0,T)\times\Omega).
\end{equation*}
The weak-* convergence in $L^{\infty}(0,T;H^{-1}(\Omega))$ follows from the uniform bound and the uniqueness of the distributional limit.
\end{proof}

\begin{lemma}[Energy identity for weak solutions]\label{lem:energy_weak}
Let $(u, v)$ be a weak solution of \eqref{eq:main} on $[0,T]$ in the sense of Definition \ref{def:weak_sol}. Then $t\mapsto\mathcal{E}(t)$ belongs to $AC([0,T])$ and, for every $0\le s\le t\le T$,
\begin{equation}\label{eq:energy_weak}
\mathcal{E}(t)+(b-\tau)\int_{s}^{t}\norm{\nabla v_{\xi}(\xi)}_{2}^{2}d\xi=\mathcal{E}(s).
\end{equation}
\end{lemma}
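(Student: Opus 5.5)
We will justify the formal computation of Proposition~\ref{prop:energy_identity} at the weak-solution level. The obstruction is that $u_t,w_t$ lie only in $L^\infty(0,T;L^2)$, so $u_t$ and $w_t$ are not admissible test functions in \eqref{eq:weak1}--\eqref{eq:weak2}. We will therefore use the standard Lions--Magenes / Strauss regularization in time, writing the system as an abstract second-order problem with a right-hand side in a dual space.

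First we write the two equations as
\[
u_{tt}-\Delta u=g_1:=f(u)-\alpha w,\qquad w_{tt}-\Delta w=g_2:=(b-\tau)\Delta v_t-\alpha u .
\]
We record the regularity of the right-hand sides.
\begin{itemize}
\item By Lemma~\ref{lem:log_growth} (choosing $\eta$ with $\gamma-1+\eta<\frac{n+2}{n-2}$, which the restriction $\gamma<\frac{2(n-1)}{n-2}$ allows), $g_1\in L^\infty(0,T;L^{p})$ with $p=\frac{2n}{n+2}$. In fact the subcriticality gives $g_1\in L^\infty(0,T;L^{p'})$ for some $p'>p$, hence $g_1\in L^\infty(0,T;H^{-1})$.
\item Also $g_2\in L^\infty(0,T;H^{-1})$, since $v_t\in L^\infty(0,T;H^1_0)$.
\end{itemize}
The difficulty is that $u_t\notin L^1(0,T;H^1_0)$, so the pairing $\langle g_i,u_t\rangle$ is not a priori meaningful. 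The way around this uses the specific structure of each term:
\begin{itemize}
\item For $g_2$, the dissipative term pairs as $(b-\tau)(\nabla v_t,\nabla w_t)$, and $w_t=v_t+\tau v_{tt}$. From the relation $w=v+\tau v_t$ we get $\tau v_{tt}=w_t-v_t\in L^\infty(0,T;L^2)$. More usefully, $v_t$ solves the ODE $\tau v_{tt}+v_t=w_t$ in time. Integrating it in the form $\tau\frac{d}{dt}\nabla v_t+\nabla v_t=\nabla w_t$ shows that $\nabla w_t=\tau\nabla v_{tt}+\nabla v_t$ in the distributional sense in $H^{-1}$-valued time derivatives of an $L^2$ function. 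This yields the identity $(\nabla v_t,\nabla w_t)=\|\nabla v_t\|^2+\frac{\tau}{2}\frac{d}{dt}\|\nabla v_t\|^2$ after mollification.
\item For the nonlinear term we use the chain rule $\frac{d}{dt}\int F(u)=(f(u),u_t)$. This holds once $u$ is regularized in time and one passes to the limit using the continuity statements of Remark~\ref{rem:continuity}.
\end{itemize}

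Concretely, the steps are as follows. We extend $(u,w,v_t)$ by reflection near the endpoints, fix $0<s<t<T$, and let $\rho_h$ be an even mollifier in time. We test \eqref{eq:weak1} with $\rho_h*\rho_h*u_t$ and \eqref{eq:weak2} with $\rho_h*\rho_h*w_t$; these are admissible, since time mollification of $u$ lands in $C^\infty(H^1_0)$, so derivatives of $\rho_h*u$ are $H^1_0$-valued. Evenness of $\rho_h$ makes the pairings $\langle(\rho_h*u)_{tt},(\rho_h*u)_t\rangle$ and $(\nabla\rho_h*u,\nabla(\rho_h*u)_t)$ exact time derivatives. Integrating over $[s,t]$ gives
\begin{itemize}
\item the quadratic parts of $\mathcal E$ for the mollified functions;
\item the coupling term $\alpha\big[(\rho_h*w,\rho_h*u_t)+(\rho_h*u,\rho_h*w_t)\big]$, which is an exact derivative of $\alpha(\rho_h*u,\rho_h*w)$;
\item the nonlinear term $\int_s^t(\rho_h*f(u),\rho_h*u_t)$;
\item the damping term $(b-\tau)\int_s^t(\nabla\rho_h*v_t,\nabla\rho_h*w_t)$, which by the ODE relation equals $(b-\tau)\int_s^t\|\nabla\rho_h*v_t\|^2+\frac{\tau(b-\tau)}2[\|\nabla\rho_h*v_t\|^2]_s^t$.
\end{itemize}

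We then let $h\to0$.
\begin{itemize}
\item The integral terms converge by dominated convergence.
\item The point values converge at Lebesgue points. Since weak continuity alone gives only lower semicontinuity of the norms, we first obtain the identity for a.e.\ $s,t$.
\item For the nonlinear term, $(\rho_h*f(u),\rho_h*u_t)$ must converge to $\frac{d}{dt}\int F(u)$. This is the main obstacle. I would handle it by writing $\int_s^t(f(u),u_t)=\lim_h\int_s^t(f(u),\partial_t(\rho_h*u))$ and comparing with $\int F(\rho_h*u)$ through the convexity-free estimate $|F(a)-F(b)-f(b)(a-b)|\le C(|a|+|b|)^{\gamma-2+\eta}|a-b|^2$. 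This estimate is integrable thanks to the $L^q$ continuity, $q<2^*$, from Remark~\ref{rem:continuity}.
\end{itemize}

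Finally, we upgrade from a.e.\ $s,t$ to all $s,t$. The right-hand side of the identity is continuous in $t$; $\int F(u(t))$ and $(u,w)$ are continuous by Remark~\ref{rem:continuity}; and the remaining quadratic part is weakly lower semicontinuous. Hence $\mathcal E$ has the identity at every point: apply it forward and backward, using time reversibility of the conservative part as in Lions--Magenes. This gives \eqref{eq:energy_weak} for all $0\le s\le t\le T$, and absolute continuity of $\mathcal E$ follows because its derivative is $-(b-\tau)\|\nabla v_t\|_2^2\in L^1$.
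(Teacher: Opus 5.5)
Your scheme is essentially the paper's: regularize in time (you mollify, the paper uses Steklov averages), test with the regularized velocities, rewrite the damping pairing through $\tau v_{tt}+v_t=w_t$, and get the identity first at Lebesgue points. Those parts are sound. The gap is the final passage from a.e.\ $s,t$ to every $s,t$.

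At an arbitrary $t_0$, weak lower semicontinuity along Lebesgue points $t_n\to t_0$, from either side, only gives $\mathcal E(t_0)\le \mathcal E(s)-(b-\tau)\int_s^{t_0}\norm{\nabla v_\xi}_2^2\,d\xi$. The reverse inequality says that no energy is lost in the weak limit at $t_0$. That is equivalent to strong continuity of $Y$ at $t_0$, and semicontinuity cannot produce it.

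Time reversal does not produce it either. Reversing time turns $-(b-\tau)\Delta v_t$ into anti-damping, so a Galerkin solution of the backward problem from $Y(t_0)$ only yields $\mathcal E(s)\le \mathcal E(t_0)+(b-\tau)\liminf_m\int_s^{t_0}\norm{\nabla \partial_t v_m}_2^2\,d\xi$. Lower semicontinuity of that integral points the wrong way. Moreover, identifying the Galerkin limit with $Y$ would need a uniqueness theorem whose proof rests on an energy identity of this very type.

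The paper's proof passes over the same point with ``by absolute continuity''. That presupposes the continuity of $t\mapsto\mathcal E(t)$, which Lemma~\ref{lem:strong_phase} afterwards derives from this lemma, so the repair below is needed there too.

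The missing idea is the Lions--Magenes/Strauss Cauchy argument.
\begin{itemize}
\item Set $g:=f(u)$. Then $g\in L^\infty(0,T;L^2(\Omega))$, because $2(\gamma-1)<2^*$ is exactly $\gamma<\frac{2(n-1)}{n-2}$. So $(f(u),u_t)$ is meaningful, contrary to your remark, and only the $\Delta v_t$ pairing needs the ODE relation.
\item Use forward Steklov averages $Y^h$ rather than reflection. An even reflection of $u$ puts $2u_1\,\delta_{t=0}$ into $u_{tt}$, so it is not a solution near $t=0$, and $s=0$ is the case used later in the paper.
\item Since the system is linear apart from $f(u)$, $Y^h$ solves the linear system exactly, including $w^h=v^h+\tau\,\partial_t v^h$, with forcing $g^h$.
\end{itemize}
Let $\mathcal E_0(Z):=\frac12\norm{z_2}_2^2+\frac12\norm{z_4}_2^2+\frac{\tau(b-\tau)}{2}\norm{\nabla z_5}_2^2+\frac12 Q_\alpha(z_1,z_3)$ be the quadratic energy on $\mathcal H$. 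Take $Z:=Y^h-Y^k$ with $h,k<\delta$, let $t^*$ be a Lebesgue point of $Y$, and set $M:=\norm{u_t}_{L^\infty(0,T;L^2)}$. Then for all $t\in[0,T-\delta]$,
\[
\mathcal E_0\bigl(Z(t)\bigr)\le \mathcal E_0\bigl(Z(t^*)\bigr)+(b-\tau)\int_0^{T-\delta}\norm{\nabla z_5}_2^2\,d\xi+2M\norm{g^h-g^k}_{L^1(0,T-\delta;L^2)},
\]
and the right-hand side tends to $0$.

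By Lemma~\ref{lem:coercivity}, $(Y^h)$ is Cauchy in $C([0,T-\delta];\mathcal H)$. Its limit equals $Y$ a.e., hence everywhere by weak continuity; at $t=0$ this holds because $Y^h(0)=\frac1h\int_0^h Y\,d\xi\rightharpoonup Y(0)$. So $Y$ is strongly continuous (use backward averages near $T$), $\mathcal E$ is continuous, and your a.e.\ identity holds for all $0\le s\le t\le T$.

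A smaller slip: the bound $|F(a)-F(b)-f(b)(a-b)|\le C(|a|+|b|)^{\gamma-2+\eta}|a-b|^2$ is false for small $a,b$, since $f'(s)\sim(\gamma-1)|s|^{\gamma-2}\ln|s|$ near $0$. Use $(1+|a|+|b|)^{\gamma-2+\eta}$ instead, or more simply \eqref{eq:f_diff} together with $f(u^h)\to f(u)$ in $L^2(0,T-\delta;L^2(\Omega))$.
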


\begin{proof}
For $h>0$, denote by $g^{h}$ the Steklov average in time,
\begin{equation*}
g^{h}(t):=\frac{1}{h}\int_{t}^{t+h}g(\xi)d\xi, \quad 0\le t\le T-h.
\end{equation*}
Applying the averaging operator to \eqref{eq:weak1}-\eqref{eq:weak2}, choosing $(u_{t})^{h}$ and $(w_{t})^{h}$ as test functions, and integrating over $(s,t)\subset[0,T-h]$, we obtain the regularized identity
\begin{equation*}
\mathcal{E}_{h}(t)-\mathcal{E}_{h}(s)+(b-\tau)\int_{s}^{t}\norm{\nabla (v_{t})^{h}(\xi)}_{2}^{2}d\xi=R_{h}(s,t),
\end{equation*}
where $\mathcal{E}_{h}$ is the natural regularized energy and $R_{h}(s,t)\rightarrow 0$ as $h\rightarrow 0$. The convergence of the quadratic terms follows from the weak continuity recorded in Remark \ref{rem:continuity}, while the potential term is handled by \eqref{eq:log_est4} together with the strong continuity of $u$ in $L^{q}(\Omega)$ for every $q<2^{*}$. Passing to the limit as $h\rightarrow 0$ yields \eqref{eq:energy_weak} first for Lebesgue points and then, by absolute continuity, for all $0\le s\le t\le T$.
\end{proof}

\begin{lemma}[Strong continuity in the phase space]
\label{lem:strong_phase}
Let $(u,v)$ be a weak solution of \eqref{eq:main} on $[0,T]$ in the sense of Definition~\ref{def:weak_sol}, and set
\[
Y(t):=(u(t),u_t(t),w(t),w_t(t),v_t(t)).
\]
Then
\[
Y\in C([0,T];\mathcal{H}).
\]
Consequently, the maps
\[
t\longmapsto Q_\alpha(u(t),w(t)),
\qquad
t\longmapsto \mathcal I_\alpha(u(t),w(t)),
\qquad
t\longmapsto \mathcal J_\alpha(u(t),w(t))
\]
are continuous on $[0,T]$.
\end{lemma}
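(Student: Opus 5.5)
Our plan is the classical argument that weak continuity plus continuity of the norm gives strong continuity. The energy identity of Lemma~\ref{lem:energy_weak} will supply continuity of the norm. From Remark~\ref{rem:continuity} and Definition~\ref{def:weak_sol} we already know that $Y\in C_w([0,T];\mathcal H)$: the components $u,w$ lie in $C_w([0,T];H_0^1)$, the components $u_t,w_t$ lie in $C_w([0,T];L^2)$, and $v_t$ lies in $C_w([0,T];H_0^1)$. We also know that $u\in C([0,T];L^q)$ for $q<2^*$, and that $\int_\Omega F(u(t))\,dx$ and $(u(t),w(t))$ depend continuously on $t$; the latter holds because $u,w\in C([0,T];L^2)$. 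Write $\mathcal E(t)=\tfrac12\mathcal N(t)+\alpha(u,w)-\int_\Omega F(u)$, where
\[
\mathcal N(t):=\norm{u_t}_2^2+\norm{w_t}_2^2+\tau(b-\tau)\norm{\nabla v_t}_2^2+\norm{\nabla u}_2^2+\norm{\nabla w}_2^2 .
\]
Since $\mathcal E\in AC([0,T])$ by Lemma~\ref{lem:energy_weak}, it follows that $\mathcal N$ is continuous on $[0,T]$.

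Next we equip $\mathcal H$ with the equivalent Hilbert norm $\norm{Y}_\ast^2:=\mathcal N$. This is equivalent to the standard product norm because $\tau(b-\tau)>0$. Fix $t_0$ and a sequence $t_j\to t_0$. We have $Y(t_j)\rightharpoonup Y(t_0)$ weakly in $(\mathcal H,\norm{\cdot}_\ast)$, which is also a Hilbert space with the same dual, and $\norm{Y(t_j)}_\ast\to\norm{Y(t_0)}_\ast$. In a Hilbert space these two facts together give strong convergence, since
\[
\norm{Y(t_j)-Y(t_0)}_\ast^2=\norm{Y(t_j)}_\ast^2-2\langle Y(t_j),Y(t_0)\rangle_\ast+\norm{Y(t_0)}_\ast^2\to0 .
\]
Hence $Y\in C([0,T];\mathcal H)$.

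The consequences then follow directly. The form $Q_\alpha$ is continuous on $H_0^1\times H_0^1$. The term $\int_\Omega|u|^\gamma\ln|u|$ is continuous along the trajectory by Remark~\ref{rem:continuity}; more directly, it is continuous on $H_0^1$ by \eqref{eq:log_est1}, the Sobolev embedding into $L^{\gamma+\eta}$, and dominated convergence. The same holds for $\norm{u}_\gamma^\gamma$, so $\mathcal I_\alpha(u(t),w(t))$ and $\mathcal J_\alpha(u(t),w(t))$ are continuous in $t$.

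The main obstacle is justifying that each component is genuinely weakly continuous, in particular $v_t$ in $H_0^1$, together with the fact that the energy identity holds for every pair $s\le t$ and not merely almost everywhere. Both are taken from Definition~\ref{def:weak_sol} and Lemma~\ref{lem:energy_weak}. If $\mathcal E$ were only known almost everywhere, we would first use weak lower semicontinuity to obtain $\limsup\le$ at every point, and then argue via reversibility or a time-reversal of the conservative part. Since Lemma~\ref{lem:energy_weak} already gives the identity for all $s\le t$, the direct argument above suffices.
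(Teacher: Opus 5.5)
Your proposal is correct and follows the paper's proof essentially step for step. You use the same equivalent norm $\norm{\cdot}_\ast$ read off from $\mathcal E$, get continuity of that norm from Lemma~\ref{lem:energy_weak} together with the continuity of $\int_\Omega F(u)\,dx$ and $\alpha(u,w)$, and then apply the Hilbert-space fact that weak convergence plus convergence of norms gives strong convergence (which you write out explicitly where the paper just calls it standard).
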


\begin{proof}
By Remark~\ref{rem:continuity}, one already has $Y\in C_w([0,T];\mathcal{H})$. Equip $\mathcal{H}$ with the equivalent Hilbert norm
\[
\|Y(t)\|_*^2:=\norm{\grad u(t)}_2^2+\norm{u_t(t)}_2^2+\norm{\grad w(t)}_2^2+\norm{w_t(t)}_2^2+\tau(b-\tau)\norm{\grad v_t(t)}_2^2.
\]
Using the definition of $\mathcal E(t)$, we can rewrite this norm as
\[
\|Y(t)\|_*^2=2\mathcal E(t)+2\int_\Omega F(u(t)) dx-2\alpha(u(t), w(t)).
\]
The energy term is continuous by Lemma~\ref{lem:energy_weak}, the potential term is continuous by Remark~\ref{rem:continuity}, and the coupling term is continuous because $u,w\in C([0,T];L^2(\Omega))$. Hence $t\mapsto \|Y(t)\|_*$ is continuous on $[0,T]$. Since $Y$ is weakly continuous in the Hilbert space $(\mathcal{H},\|\cdot\|_*)$ and its norm is continuous, the standard Hilbert space argument gives strong continuity of $Y$ in the equivalent norm $\|\cdot\|_*$. Therefore $Y\in C([0,T];\mathcal{H})$ with the original norm as well. The continuity of $Q_\alpha$, $\mathcal I_\alpha$, and $\mathcal J_\alpha$ now follows from the strong continuity of $(u,w)$ in $H_0^1(\Omega)\times H_0^1(\Omega)$ together with Remark~\ref{rem:continuity}.
\end{proof}

\begin{theorem}[Global weak solutions below the well depth]\label{thm:global_existence}
Assume \eqref{eq:params}. Let
\begin{equation*}
u_{0}\in H_{0}^{1}(\Omega), \; u_{1}\in L^{2}(\Omega), \; v_{0}\in H_{0}^{1}(\Omega), \; v_{1}\in H_{0}^{1}(\Omega), \; v_{2}\in L^{2}(\Omega),
\end{equation*}
and define
\begin{equation*}
w_{0}:=v_{0}+\tau v_{1}, \quad w_{1}:=v_{1}+\tau v_{2}.
\end{equation*}
If
\begin{equation*}
\mathcal{E}(0)<d_{\alpha} \quad \text{and} \quad \mathcal{I}_{\alpha}(u_{0},w_{0})>0,
\end{equation*}
then \eqref{eq:main} admits a unique global weak solution in the sense of Definition \ref{def:weak_sol}.  Moreover, the associated state
\[
Y(t)=\bigl(u(t),u_t(t),w(t),w_t(t),v_t(t)\bigr)
\]
belongs to $C([0,\infty);\mathcal H)$. In particular,
\[
u,w\in L^\infty(0,\infty;H_0^1(\Omega))\cap C([0,\infty);H_0^1(\Omega)),
\]
\[
u_t,w_t\in L^\infty(0,\infty;L^2(\Omega))\cap C([0,\infty);L^2(\Omega)),
\]
\[
v_t\in L^\infty(0,\infty;H_0^1(\Omega))\cap C([0,\infty);H_0^1(\Omega)).
\]
Furthermore:
\begin{itemize}
\item[(i)] the exact energy identity \eqref{eq:energy_dissipation} holds for all $t\ge 0$;
\item[(ii)] the configuration pair $(u(t),w(t))$ belongs to $\mathcal{W}_{\alpha}$ for every $t\ge 0$;
\item[(iii)] there exists $C>0$, depending only on the initial data and the parameters, such that
\begin{equation*}
\norm{u_{t}(t)}_{2}^{2}+\norm{w_{t}(t)}_{2}^{2}+\norm{\nabla v_{t}(t)}_{2}^{2}+\norm{\nabla u(t)}_{2}^{2}+\norm{\nabla w(t)}_{2}^{2}+\norm{u(t)}_{\gamma}^{\gamma}\le C
\end{equation*}
for all $t\ge 0$.
\end{itemize}
\end{theorem}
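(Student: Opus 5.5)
We follow the classical Payne--Sattinger scheme adapted to the coupled pair $(u,w)$: Galerkin approximation, invariance of the stable set at the discrete level, uniform bounds from the energy, passage to the limit via Lemma~\ref{lem:compactness}, and then uniqueness through a Gronwall estimate on the difference of two solutions.

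\emph{Approximation and invariance.} Let $\{e_k\}$ be the Dirichlet eigenbasis. We seek $u_m=\sum_{k\le m}a_{km}(t)e_k$ and $v_m=\sum_{k\le m}c_{km}(t)e_k$ solving the Galerkin projection of \eqref{eq:main}. The initial data $u_{0m},u_{1m},v_{0m},v_{1m},v_{2m}$ are the projections converging in the respective spaces. Then $w_{0m}\to w_0$ in $H_0^1$ and $w_{1m}\to w_1$ in $L^2$, so $\mathcal E_m(0)\to\mathcal E(0)$ and $\mathcal I_\alpha(u_{0m},w_{0m})\to\mathcal I_\alpha(u_0,w_0)$. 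For $m$ large we therefore have $\mathcal E_m(0)<d_\alpha$ and $\mathcal I_\alpha(u_{0m},w_{0m})>0$. The ODE system has locally Lipschitz right-hand side, since $f$ is $C^1$ when $\gamma>2$, so a local solution exists. Proposition~\ref{prop:energy_identity} applies verbatim to the Galerkin system, because the test functions $u_m'$ and $w_m'$ lie in the span, and gives $\mathcal E_m(t)\le\mathcal E_m(0)<d_\alpha$.

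We then show $(u_m(t),w_m(t))\in\mathcal W_\alpha$ for all $t$ by a first-exit argument. Suppose $t^*$ is the first time at which $\mathcal I_\alpha(u_m,w_m)=0$ with $(u_m,w_m)\ne(0,0)$. Then $(u_m(t^*),w_m(t^*))\in\mathcal N_\alpha$, so $\mathcal J_\alpha\ge d_\alpha$. On the other hand, $\mathcal J_\alpha(u_m,w_m)\le\mathcal E_m(t^*)<d_\alpha$, since the kinetic terms and $\frac{\tau(b-\tau)}2\|\nabla v_{mt}\|^2$ are nonnegative. This is a contradiction. Passages through $(0,0)$ are harmless, because Lemma~\ref{lem:local_positive} gives $\mathcal I_\alpha>0$ on a punctured neighbourhood of the origin.

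\emph{Uniform bounds and limit.} On $\mathcal W_\alpha$ we have $\mathcal I_\alpha\ge0$. Lemma~\ref{lem:algebraic} then gives
\[
\tfrac{\gamma-2}{2\gamma}Q_\alpha+\tfrac1{\gamma^2}\|u_m\|_\gamma^\gamma\le\mathcal J_\alpha<d_\alpha .
\]
Combined with Lemma~\ref{lem:coercivity}, this bounds $\|\nabla u_m\|_2$, $\|\nabla w_m\|_2$ and $\|u_m\|_\gamma^\gamma$. The energy then bounds the remaining terms: $\mathcal E_m(0)-\mathcal J_\alpha\ge$ the kinetic part, and $\mathcal J_\alpha=\frac12Q_\alpha-\int F$ exactly. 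This yields (iii) uniformly in $m$ and $t$, so the approximate solutions are global. We also need $u_{mtt},w_{mtt}$ bounded in $L^\infty(0,T;H^{-1})$; this follows from the equations together with the bound on $f(u_m)$ in $L^\infty(L^{2n/(n+2)})$ from Lemma~\ref{lem:compactness}. Here we choose $\eta$ small so that $\gamma-1+\eta<\frac{n+2}{n-2}$, which is possible because $\gamma<\frac{2(n-1)}{n-2}<\frac{2n}{n-2}$. Weak-$*$ compactness, together with Lemma~\ref{lem:compactness} for the nonlinear term, lets us pass to the limit in \eqref{eq:weak1}--\eqref{eq:weak2}. We identify $w=v+\tau v_t$ by linearity: $v$ is recovered by solving the ODE $\tau v_t+v=w$, $v(0)=v_0$, which also shows $v_t\in L^\infty(H_0^1)$.

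\emph{Remaining claims.} Lemma~\ref{lem:energy_weak} gives (i), and Lemma~\ref{lem:strong_phase} gives $Y\in C([0,\infty);\mathcal H)$ on every $[0,T]$. Statement (ii) for the limit is proved by repeating the first-exit argument on the weak solution itself. This uses the exact identity (i) and the continuity of $\mathcal I_\alpha$ and $\mathcal J_\alpha$ supplied by Lemma~\ref{lem:strong_phase}, and it avoids any lower-semicontinuity issue in passing the discrete invariance to the limit.

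\emph{Uniqueness, the main obstacle.} For two solutions we take the difference $(\bar u,\bar w,\bar v)$ and aim for a Gronwall inequality on the quadratic energy of the differences. The difficulty is twofold. First, the test function $\bar u_t$ lies only in $L^2$, which has to be justified by the Steklov averaging used in Lemma~\ref{lem:energy_weak}. Second, the nonlinear difference must be controlled. We would use
\[
|f(u)-f(\tilde u)|\le C\bigl(1+|u|^{\gamma-2+\eta}+|\tilde u|^{\gamma-2+\eta}\bigr)|u-\tilde u| .
\]
The logarithm in $f'(s)=|s|^{\gamma-2}((\gamma-1)\ln|s|+1)$ is tamed because $\gamma>2$. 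With Hölder's inequality with exponents $\frac n2$ (for the coefficient in $L^{n(\gamma-2+\eta)/2}$), $2^*$ and $2$, we obtain
\[
\bigl(f(u)-f(\tilde u),\bar u_t\bigr)\le C\|\nabla\bar u\|_2\|\bar u_t\|_2 .
\]
This requires $(\gamma-2+\eta)\frac n2\le2^*$, that is $\gamma-2+\eta\le\frac4{n-2}$, i.e. $\gamma\le\frac{2(n-1)}{n-2}-\eta$; this is precisely the assumed strict subcriticality, so $\eta$ can be chosen small enough. Gronwall's inequality then yields $\bar u=\bar w=0$, $\bar v_t=0$, hence $\bar v=0$.
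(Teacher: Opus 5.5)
Your proof follows the paper's argument step for step. The ingredients match: Galerkin approximation with the discrete energy identity and a first-contact argument (Lemma~\ref{lem:local_positive} handling passages through the origin), uniform bounds from Lemmas~\ref{lem:algebraic} and~\ref{lem:coercivity}, and passage to the limit through Lemma~\ref{lem:compactness}. You then use Lemmas~\ref{lem:energy_weak} and~\ref{lem:strong_phase} for (i) and strong continuity, a second first-contact argument on the limit for (ii), and a Steklov-averaged difference energy with Gronwall for uniqueness. The existence half is fine. The one step that fails as written is the H\"older estimate in the uniqueness part.

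The exponents $\frac n2$, $2^*$, $2$ do not form a H\"older triple, since
\[
\frac{2}{n}+\frac{n-2}{2n}+\frac12=1+\frac1n>1,
\]
so the bound $\int_\Omega |u|^{\gamma-2+\eta}|\bar u||\bar u_t|\,dx\le \norm{|u|^{\gamma-2+\eta}}_{n/2}\norm{\bar u}_{2^*}\norm{\bar u_t}_2$ is not available. In addition, your condition $(\gamma-2+\eta)\frac n2\le 2^*$ is equivalent to $\gamma\le 2+\frac{4}{n-2}-\eta=2^*-\eta$, not to $\gamma\le\frac{2(n-1)}{n-2}-\eta$. The two slips happen to land on the correct hypothesis. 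Taken at face value, however, your chain would give uniqueness in the whole range $\gamma<2^*$, which this Lipschitz-type energy estimate cannot deliver.

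The correct triple, the one used in Step~4 of the paper, is $n$, $2^*$, $2$. The coefficient $|u^i|^{\gamma-2+\eta}$ must lie in $L^n(\Omega)$, i.e.\ $u^i\in L^{n(\gamma-2+\eta)}(\Omega)$. The embedding $H_0^1(\Omega)\hookrightarrow L^{2^*}(\Omega)$ and the uniform $H_0^1$ bounds provide this precisely when $\gamma-2+\eta\le\frac{2}{n-2}$, i.e.\ $\gamma\le\frac{2(n-1)}{n-2}-\eta$. This is where the strict upper bound on $\gamma$ in \eqref{eq:params} enters the uniqueness argument, and the small $\eta>0$ is what absorbs the logarithm in $f'$. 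With this correction one gets $\bigl(f(u^1)-f(u^2),\bar u_t\bigr)\le C\bigl(\norm{\nabla\bar u}_2^2+\norm{\bar u_t}_2^2\bigr)$, and your Gronwall argument closes exactly as in the paper.
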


\begin{proof}
We split the argument into four steps.

\textbf{Step 1: Galerkin approximation and uniform bounds.} Let $\{e_{k}\}_{k\ge 1}$ be the Dirichlet eigenfunctions of $-\Delta$ and let $P_{m}$ denote the orthogonal projection onto span$\{e_{1},\dots,e_{m}\}$. Define projected initial data
\begin{equation*}
u_{0m}:=P_{m}u_{0}, \quad u_{1m}:=P_{m}u_{1}, \quad v_{0m}:=P_{m}v_{0}, \quad v_{1m}:=P_{m}v_{1}, \quad v_{2m}:=P_{m}v_{2},
\end{equation*}
and set $w_{0m}:=v_{0m}+\tau v_{1m}$, $w_{1m}:=v_{1m}+\tau v_{2m}$. We seek approximate solutions $(u_{m},v_{m})$ in span$\{e_{1},\dots,e_{m}\}$. Since $f$ is locally Lipschitz on $\mathbb{R}$, the Galerkin coefficients solve a locally well-posed finite-dimensional ODE by the Picard-Lindel\"of theorem. Set $w_{m}:=v_{m}+\tau\partial_{t}v_{m}$. Testing the Galerkin system exactly as in Proposition \ref{prop:energy_identity}, we obtain
\begin{equation*}
\mathcal{E}_{m}(t)+(b-\tau)\int_{0}^{t}\norm{\nabla v_{m,s}(s)}_{2}^{2}ds=\mathcal{E}_{m}(0),
\end{equation*}
where $\mathcal{E}_{m}$ is the approximate counterpart of \eqref{eq:energy_def}. Because
\begin{equation*}
\mathcal{E}_{m}(0)\rightarrow\mathcal{E}(0)<d_{\alpha}, \quad \mathcal{I}_{\alpha}(u_{0m},w_{0m})\rightarrow\mathcal{I}_{\alpha}(u_{0},w_{0})>0,
\end{equation*}
we may assume, for $m$ large enough, that
\begin{equation*}
\mathcal{E}_{m}(0)<d_{\alpha}, \quad \mathcal{I}_{\alpha}(u_{0m},w_{0m})>0.
\end{equation*}
Since the Galerkin trajectories are smooth, the standard first-contact argument yields
\begin{equation*}
\mathcal{I}_{\alpha}(u_{m}(t),w_{m}(t))>0, \quad \mathcal{J}_{\alpha}(u_{m}(t),w_{m}(t))<d_{\alpha} \quad \text{for all } t>0.
\end{equation*}
Applying Lemmas \ref{lem:algebraic} and \ref{lem:coercivity}, we find a constant $C$, independent of $m$ and $t$, such that
\begin{equation*}
\norm{u_{m,t}(t)}_{2}^{2}+\norm{w_{m,t}(t)}_{2}^{2}+\norm{\nabla v_{m,t}(t)}_{2}^{2}+\norm{\nabla u_{m}(t)}_{2}^{2}+\norm{\nabla w_{m}(t)}_{2}^{2}+\norm{u_{m}(t)}_{\gamma}^{\gamma}\le C.
\end{equation*}
Moreover, using the equations together with \eqref{eq:log_est3}, we obtain on every finite interval $[0, T]$,
\begin{equation*}
u_{m,tt}=\Delta u_{m}-\alpha w_{m}+f(u_{m}), \quad w_{m,tt}=\Delta w_{m}+(b-\tau)\Delta v_{m,t}-\alpha u_{m},
\end{equation*}
hence $u_{m,tt}, w_{m,tt}$ are bounded in $L^{\infty}(0,T;H^{-1}(\Omega))$. Therefore the Galerkin solutions extend globally in time.

\textbf{Step 2: Passage to the limit and invariance of the stable set.} Fix $T>0$. By the uniform bounds from Step 1 and Lemma \ref{lem:compactness}, up to a subsequence,
\begin{align*}
u_{m}\rightarrow u, \quad w_{m}\rightarrow w \quad &\text{strongly in } C([0,T];L^{2}(\Omega))\cap L^{2}(0,T;L^{q}(\Omega)) \text{ for every } q<2^{*}, \\
u_{m}\rightharpoonup^{*} u, \quad w_{m}\rightharpoonup^{*} w \quad &\text{in } L^{\infty}(0,T;H_{0}^{1}(\Omega)), \\
u_{m,t}\rightharpoonup^{*} u_{t}, \quad w_{m,t}\rightharpoonup^{*} w_{t} \quad &\text{in } L^{\infty}(0,T;L^{2}(\Omega)), \\
v_{m,t}\rightharpoonup^{*} v_{t} \quad &\text{in } L^{\infty}(0,T;H_{0}^{1}(\Omega)), \\
f(u_{m})\rightarrow f(u) \quad &\text{in } L^{1}((0,T)\times\Omega).
\end{align*}
Passing to the limit in the Galerkin formulation yields a weak solution in the sense of Definition \ref{def:weak_sol}. By Lemmas~\ref{lem:energy_weak} and \ref{lem:strong_phase}, the associated state $Y$ belongs to $C([0,T];\mathcal H)$. In particular,
\[
t\longmapsto \mathcal I_\alpha(u(t),w(t)),
\qquad
t\longmapsto \mathcal J_\alpha(u(t),w(t))
\]
are continuous on $[0,T]$.

Moreover, the energy identity gives
\[
\mathcal J_\alpha(u(t),w(t))
\le \mathcal E(t)
\le \mathcal E(0)<d_\alpha
\qquad\text{for all }t\in[0,T].
\]
We claim that
\[
\mathcal I_\alpha(u(t),w(t))>0
\qquad\text{for all }t\in[0,T].
\]
Suppose, on the contrary, that the set
\[
\Sigma:=\{t\in[0,T]:\ \mathcal I_\alpha(u(t),w(t))<0\}
\]
is nonempty, and let $t_*:=\inf\Sigma$. Since $\mathcal I_\alpha(u_0,w_0)>0$ and $t\mapsto \mathcal I_\alpha(u(t),w(t))$ is continuous, one has $0<t_*\le T$ and
\[
\mathcal I_\alpha(u(t_*),w(t_*))=0.
\]
If $(u(t_*),w(t_*))\neq(0,0)$, then $(u(t_*),w(t_*))\in\mathcal N_\alpha$, and therefore
\[
\mathcal J_\alpha(u(t_*),w(t_*))\ge d_\alpha,
\]
which contradicts the strict bound above. If instead $(u(t_*),w(t_*))=(0,0)$, then Lemma~\ref{lem:local_positive} and the continuity of $t\mapsto Q_\alpha(u(t),w(t))$ imply that there exists $\delta>0$ such that
\[
Q_\alpha(u(t),w(t))\le \rho_0
\qquad\text{for all }t\in[t_*,\min\{T,t_*+\delta\}],
\]
and hence
\[
\mathcal I_\alpha(u(t), w(t)) \ge 0 \quad \text{for all } t \in [t_*, \min\{T,t_*+\delta\}],
\]
with strict positivity whenever \((u(t),w(t))\neq(0,0)\). In particular, no point of
\((t_*, \min\{T,t_*+\delta\}]\) belongs to \(\Sigma\), contradicting the definition of
\(t_*\) as the infimum of \(\Sigma\). Therefore $\Sigma=\varnothing$, so $\mathcal I_\alpha(u(t),w(t))>0$ for every $t\in[0,T]$. Consequently, $(u(t),w(t))\in\mathcal W_\alpha$ on $[0,T]$, and since $T>0$ is arbitrary this remains true for all $t\ge0$.

\textbf{Step 3: Exact energy identity and uniform control.} By Lemma \ref{lem:energy_weak}, the weak solution satisfies the exact energy identity \eqref{eq:energy_dissipation}. Since $(u(t),w(t))\in\mathcal{W}_{\alpha}$ and $\mathcal{I}_{\alpha}(u(t),w(t))\ge 0$ for every $t\ge 0$, Lemma \ref{lem:algebraic} implies
\begin{equation*}
\mathcal{J}_{\alpha}(u(t),w(t))\ge\frac{\gamma-2}{2\gamma}Q_{\alpha}(u(t),w(t))+\frac{1}{\gamma^{2}}\norm{u(t)}_{\gamma}^{\gamma}.
\end{equation*}
Combining this inequality with Lemma \ref{lem:coercivity} and the bound $\mathcal{E}(t)\le\mathcal{E}(0)$ yields
\begin{equation*}
\norm{u_{t}(t)}_{2}^{2}+\norm{w_{t}(t)}_{2}^{2}+\norm{\nabla v_{t}(t)}_{2}^{2}+\norm{\nabla u(t)}_{2}^{2}+\norm{\nabla w(t)}_{2}^{2}+\norm{u(t)}_{\gamma}^{\gamma}\le C
\end{equation*}
for all $t\ge 0$. This proves the asserted a priori estimate.

\textbf{Step 4: Uniqueness and continuous dependence.} Let $(u^{1},v^{1})$ and $(u^{2},v^{2})$ be two weak solutions with initial data in the class of the theorem, and set
\begin{equation*}
z=u^{1}-u^{2}, \quad r=v^{1}-v^{2}, \quad y=w^{1}-w^{2}=r+\tau r_{t}.
\end{equation*}
Then
\begin{equation}\label{eq:diff}
\begin{cases}
z_{tt}-\Delta z+\alpha y=f(u^{1})-f(u^{2}), \\
y_{tt}-\Delta y-(b-\tau)\Delta r_{t}+\alpha z=0.
\end{cases}
\end{equation}
Fix $\mu>0$ so small that
\begin{equation*}
\gamma-2+\mu<\frac{2}{n-2}.
\end{equation*}
For $s\ne 0$,
\begin{equation*}
f^{\prime}(s)=|s|^{\gamma-2}((\gamma-1)\ln|s|+1),
\end{equation*}
hence
\begin{equation*}
|f^{\prime}(s)|\le C_{\mu}(1+|s|^{\gamma-2+\mu}).
\end{equation*}
By the mean value theorem,
\begin{equation}\label{eq:f_diff}
|f(a)-f(b)|\le C_{\mu}(1+|a|^{\gamma-2+\mu}+|b|^{\gamma-2+\mu})|a-b|.
\end{equation}
Since the weak regularity does not allow us to test \eqref{eq:diff} directly by $(z_{t},y_{t})$, we first apply Steklov averaging in time, use the regularized pair $((z_{t})^{h},(y_{t})^{h})$ as test functions, and then pass to the limit $h\rightarrow 0$. This yields the difference energy identity
\begin{equation*}
\frac{d}{dt}\mathcal{Z}(t)+(b-\tau)\norm{\nabla r_{t}(t)}_{2}^{2}=\int_{\Omega}(f(u^{1})-f(u^{2}))z_{t}dx,
\end{equation*}
where
\begin{equation*}
\mathcal{Z}(t):=\frac{1}{2}\norm{z_{t}}_{2}^{2}+\frac{1}{2}\norm{y_{t}}_{2}^{2}+\frac{\tau(b-\tau)}{2}\norm{\nabla r_{t}}_{2}^{2}+\frac{1}{2}\norm{\nabla z}_{2}^{2}+\frac{1}{2}\norm{\nabla y}_{2}^{2}+\alpha(z,y).
\end{equation*}
By Lemma \ref{lem:coercivity}, $\mathcal{Z}(t)$ is equivalent to
\begin{equation*}
\norm{z_{t}}_{2}^{2}+\norm{y_{t}}_{2}^{2}+\norm{\nabla r_{t}}_{2}^{2}+\norm{\nabla z}_{2}^{2}+\norm{\nabla y}_{2}^{2}.
\end{equation*}
Using \eqref{eq:f_diff}, H\"older's inequality with exponents $n$, $\frac{2n}{n-2}$, and $2$, together with the Sobolev embedding, we infer
\begin{align*}
\int_{\Omega}|f(u^{1})-f(u^{2})||z_{t}|dx &\le C(1+\norm{u^{1}}_{n(\gamma-2+\mu)}^{\gamma-2+\mu}+\norm{u^{2}}_{n(\gamma-2+\mu)}^{\gamma-2+\mu})\norm{z}_{\frac{2n}{n-2}}\norm{z_{t}}_{2} \\
&\le C(\norm{\nabla z}_{2}^{2}+\norm{z_{t}}_{2}^{2})\le C\mathcal{Z}(t).
\end{align*}
Therefore $\mathcal{Z}^{\prime}(t)\le C\mathcal{Z}(t)$. Gronwall's lemma yields uniqueness when the initial data coincide. More generally,
\begin{equation*}
\mathcal{Z}(t)\le e^{Ct}\mathcal{Z}(0) \quad \text{for all } t\ge 0,
\end{equation*}
which provides continuous dependence on the initial state in the phase space $\mathcal{H}$.
\end{proof}

\section{Conditional asymptotic stability}

The previous theorem provides a global solution theory and a strict Lyapunov functional. To place the asymptotic argument into the standard infinite dimensional framework, we first record the resulting semiflow structure.

\begin{proposition}[Semiflow and Lyapunov structure]\label{prop:semiflow}
For
\[
Y=(u_0,u_1,w_0,w_1,v_1)\in\mathcal H,
\]
define the phase energy
\[
\mathcal E(Y):=
\frac12\|u_1\|_2^2
+\frac12\|w_1\|_2^2
+\frac{\tau(b-\tau)}2\|\nabla v_1\|_2^2
+\frac12\|\nabla u_0\|_2^2
+\frac12\|\nabla w_0\|_2^2
+\alpha(u_0,w_0)
-\int_\Omega F(u_0)\,dx.
\]
Let
\[
\mathcal X_\alpha
:=
\Bigl\{
Y=(u_0,u_1,w_0,w_1,v_1)\in\mathcal H:
\ \mathcal E(Y)<d_\alpha,\ (u_0,w_0)\in\mathcal W_\alpha
\Bigr\}.
\]
Then, for every \(Y_0\in\mathcal X_\alpha\), there exists a unique global weak solution of \eqref{eq:main}, and we denote by \(S(t)Y_0\) the corresponding state at time \(t\ge0\). The family \(\{S(t)\}_{t\ge0}\) is a continuous semiflow on \(\mathcal X_\alpha\). Moreover, for every \(Y_0\in\mathcal X_\alpha\) and every \(t\ge0\),
\[
\mathcal E(S(t)Y_0)
+
(b-\tau)\int_0^t \|\nabla v_s(s)\|_2^2\,ds
=
\mathcal E(Y_0),
\]
so \(\mathcal E\) is a Lyapunov functional on \(\mathcal X_\alpha\). Consequently, every relatively compact orbit in \(\mathcal X_\alpha\) has a nonempty compact invariant \(\omega\)-limit set, and Hale's version of LaSalle's invariance principle applies; see \cite[Chap.~1]{Hale}.
\end{proposition}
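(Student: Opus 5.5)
The plan is to reduce everything to Theorem~\ref{thm:global_existence}, after first checking that the hypotheses of that theorem are met by every $Y_0\in\mathcal X_\alpha$. The theorem is stated for data $(u_0,u_1,v_0,v_1,v_2)$, whereas the phase variable here is $(u_0,u_1,w_0,w_1,v_1)$. I will therefore reconstruct $v_0:=w_0-\tau v_1\in H_0^1(\Omega)$ and $v_2:=(w_1-v_1)/\tau\in L^2(\Omega)$; this correspondence is bijective, and the energy $\mathcal E(Y_0)$ coincides with $\mathcal E(0)$ in \eqref{eq:energy_def}.

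Next I split according to whether $(u_0,w_0)$ is the origin. If $(u_0,w_0)\in\mathcal W_\alpha\setminus\{(0,0)\}$, then $\mathcal I_\alpha(u_0,w_0)>0$ and Theorem~\ref{thm:global_existence} applies directly. The case $(u_0,w_0)=(0,0)$ is the one I expect to be the main obstacle, because $\mathcal I_\alpha=0$ there and the theorem does not apply verbatim. In that case I will rerun Step~2 of the proof: the first-contact argument only needs the condition at $t=0$ to be either $\mathcal I_\alpha>0$ or $(u,w)=(0,0)$, and Lemma~\ref{lem:local_positive} handles the instants where the solution passes through the origin. The argument treats $\inf\Sigma$ in the same way whether it equals $0$ or is positive. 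For the Galerkin step I will use that projected data $P_m u_0=P_m w_0=0$ also vanish, and the same Lemma~\ref{lem:local_positive} argument works there. Uniqueness then follows from Step~4.

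With existence and uniqueness in hand, I will verify the semiflow properties. Invariance $S(t)\mathcal X_\alpha\subset\mathcal X_\alpha$ follows from (ii) of Theorem~\ref{thm:global_existence} together with $\mathcal E(S(t)Y_0)\le\mathcal E(Y_0)<d_\alpha$. The semigroup law $S(t+s)=S(t)S(s)$ comes from uniqueness, since the translated solution is again a weak solution starting from $S(s)Y_0\in\mathcal X_\alpha$. Continuity in $t$ is Lemma~\ref{lem:strong_phase}. Continuity in $Y_0$, locally uniformly in $t$, comes from the Gronwall bound $\mathcal Z(t)\le e^{Ct}\mathcal Z(0)$ of Step~4. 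Here I need the constant $C$ to be uniform on bounded sets of data. That follows because (iii) bounds $\|\nabla u\|_2$ in terms of the energy, and the energy is at most $d_\alpha$. Finally, $\mathcal Z$ is equivalent to the squared $\mathcal H$-distance by Lemma~\ref{lem:coercivity}. Together these give joint continuity of $(t,Y_0)\mapsto S(t)Y_0$.

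The Lyapunov identity is exactly Lemma~\ref{lem:energy_weak}. It shows that $t\mapsto\mathcal E(S(t)Y_0)$ is nonincreasing, and $\mathcal E$ is continuous on $\mathcal H$ by \eqref{eq:log_est4} and the Sobolev embedding. For the last assertion I will take a relatively compact orbit and invoke the standard facts for continuous semiflows in Hale \cite[Chap.~1]{Hale}. These give that $\omega(Y_0)$ is nonempty, compact, invariant and attracting, and that $\mathcal E$ is constant on it; I will note that invariance needs $\omega(Y_0)\subset\mathcal X_\alpha$ or a continuous extension. Since $\mathcal E$ is nonincreasing along the orbit, its limit along the orbit, which is the constant value of $\mathcal E$ on $\omega(Y_0)$, is strictly less than $d_\alpha$. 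To get the stable-set condition I will argue from the closure side: limits of points with $\mathcal I_\alpha>0$ and $\mathcal J_\alpha<d_\alpha$ cannot lie on $\mathcal N_\alpha$, because $\mathcal J_\alpha\le\mathcal E<d_\alpha$ there. Hence $\omega(Y_0)\subset\mathcal X_\alpha$, and LaSalle's principle applies.
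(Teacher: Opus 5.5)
Your proposal is correct and follows the paper's proof. You reduce to Theorem~\ref{thm:global_existence} when $(u_0,w_0)\neq(0,0)$ and rerun the Galerkin/first-contact argument with Lemma~\ref{lem:local_positive} when $(u_0,w_0)=(0,0)$. You then take uniqueness, the semigroup law and continuity from the Step~4 Gronwall estimate, the Lyapunov identity from the energy identity, and the $\omega$-limit conclusions from Hale.

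You also make explicit several points the paper leaves implicit, and all of them are sound:
\begin{itemize}
\item the bijection between $(v_0,v_2)$ and $(w_0,w_1)$;
\item the uniformity of the Gronwall constant on $\mathcal X_\alpha$, which comes from $\mathcal E<d_\alpha$;
\item the closure argument giving $\omega(Y_0)\subset\mathcal X_\alpha$, which is needed so that invariance of the $\omega$-limit set makes sense for the semiflow on $\mathcal X_\alpha$.
\end{itemize}
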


\begin{proof}
If \((u_0,w_0)\neq(0,0)\), then \((u_0,w_0)\in\mathcal W_\alpha\) means
\[
\mathcal J_\alpha(u_0,w_0)<d_\alpha,
\qquad
\mathcal I_\alpha(u_0,w_0)>0,
\]
and therefore Theorem \ref{thm:global_existence} provides a unique global weak solution.

If \((u_0,w_0)=(0,0)\), then \(\mathcal E(Y_0)<d_\alpha\), and the same Galerkin construction used in the proof of Theorem \ref{thm:global_existence} yields a global weak solution. In this case, Lemma \ref{lem:local_positive} gives local positivity of \(\mathcal I_\alpha\) as soon as the configuration leaves the origin, and the first-contact argument from Step~2 of Theorem \ref{thm:global_existence} shows that
\[
(u(t),w(t))\in\mathcal W_\alpha
\qquad\text{for all }t\ge0.
\]
Uniqueness follows from the same continuous dependence estimate proved in Step~4 of Theorem \ref{thm:global_existence}, whose derivation uses only the difference system and the a priori energy bounds.

The semigroup property
\[
S(t+s)Y_0=S(t)(S(s)Y_0)
\qquad\text{for all }t,s\ge0
\]
follows from uniqueness. The continuity of \(S(t)\) with respect to the initial state follows from the continuous dependence estimate of Step~4 in Theorem \ref{thm:global_existence}. The Lyapunov property is exactly the energy identity from Theorem \ref{thm:global_existence} (i). The final statement is standard for continuous semiflows with precompact orbits and a Lyapunov functional; see Hale \cite[Chap.~1]{Hale}.
\end{proof}

\begin{theorem}[Conditional asymptotic stability]\label{thm:asymptotic}
Assume \eqref{eq:params} and, in addition,
\begin{equation*}
0<|\alpha|<\lambda_{1}.
\end{equation*}
Let $(u,v)$ be the global weak solution given by Theorem \ref{thm:global_existence}. If the trajectory
\begin{equation*}
\{Y(t):t\ge 0\}\subset\mathcal{H}
\end{equation*}
is relatively compact in $\mathcal{H}$, then
\begin{equation*}
\lim_{t\rightarrow\infty}\mathcal{E}(t)=0.
\end{equation*}
Equivalently,
\begin{equation*}
\lim_{t\rightarrow\infty}(\norm{u_{t}(t)}_{2}^{2}+\norm{w_{t}(t)}_{2}^{2}+\norm{\nabla v_{t}(t)}_{2}^{2}+\norm{\nabla u(t)}_{2}^{2}+\norm{\nabla w(t)}_{2}^{2})=0,
\end{equation*}
and $(u(t),w(t))\rightarrow(0,0)$ in $H_{0}^{1}(\Omega)\times H_{0}^{1}(\Omega)$.
\end{theorem}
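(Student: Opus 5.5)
We argue with LaSalle's invariance principle in the form recorded in Proposition~\ref{prop:semiflow}. By Theorem~\ref{thm:global_existence}, $Y_0\in\mathcal X_\alpha$. The orbit $\{S(t)Y_0\}$ is relatively compact by hypothesis, so the $\omega$-limit set $\omega(Y_0)\subset\overline{\mathcal X_\alpha}$ is nonempty, compact and invariant. Since $\mathcal E$ is nonincreasing and bounded below along the orbit (by Lemmas~\ref{lem:algebraic} and \ref{lem:coercivity}, $\mathcal E\ge 0$ in the stable set), the limit $\mathcal E_\infty:=\lim_{t\to\infty}\mathcal E(t)$ exists. Continuity of $\mathcal E$ on $\mathcal H$ then gives $\mathcal E\equiv\mathcal E_\infty$ on $\omega(Y_0)$.

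One point needs care: a limit point might lie on the boundary where $\mathcal E=d_\alpha$. This cannot happen, because $\mathcal E_\infty\le\mathcal E(0)<d_\alpha$. Moreover, $\mathcal I_\alpha\ge0$ passes to limits, so the first-contact argument of Step~2 shows that trajectories issued from $\omega(Y_0)$ remain in $\mathcal X_\alpha$, where the semiflow is defined. Consequently, any solution $\bar Y(t)=S(t)\bar Y_0$ with $\bar Y_0\in\omega(Y_0)$ has constant energy. By the energy identity this gives $\nabla\bar v_t\equiv0$, hence $\bar v_t\equiv0$ for all $t\ge0$.

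The core step is to show that such a solution is trivial. From $\bar v_t=0$ we get $\bar w=\bar v+\tau\bar v_t=\bar v$, with $\bar v$ independent of $t$, so $\bar w_t=0$ and $\bar w_{tt}=0$. The second equation in \eqref{eq:augmented}, taken in the weak form \eqref{eq:weak2}, reduces to $-\Delta\bar w+\alpha\bar u=0$. Since $\alpha\neq0$, this gives $\bar u=\alpha^{-1}\Delta\bar w$, which is independent of $t$. Hence $\bar u_t=0$, and the first equation becomes the stationary problem
\[
-\Delta\bar u+\alpha\bar w=f(\bar u),\qquad -\Delta\bar w+\alpha\bar u=0 .
\]
Testing the first equation with $\bar u$ and the second with $\bar w$ and adding yields $Q_\alpha(\bar u,\bar w)=\int_\Omega|\bar u|^\gamma\ln|\bar u|\,dx$, that is, $\mathcal I_\alpha(\bar u,\bar w)=0$. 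If $(\bar u,\bar w)\neq(0,0)$, then $(\bar u,\bar w)\in\mathcal N_\alpha$ and so $\mathcal J_\alpha(\bar u,\bar w)\ge d_\alpha$. On the other hand, since the velocities vanish, $\mathcal J_\alpha(\bar u,\bar w)=\mathcal E(\bar Y_0)=\mathcal E_\infty<d_\alpha$, a contradiction. Therefore $\bar u=\bar w=0$, and also $\bar v_t=0$, $\bar u_t=\bar w_t=0$. This shows $\omega(Y_0)=\{0\}$.

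Finally, because the orbit is relatively compact and every sequence $S(t_k)Y_0$ with $t_k\to\infty$ has a subsequence converging to a point of $\omega(Y_0)=\{0\}$, we obtain $Y(t)\to0$ in $\mathcal H$. Continuity of $\mathcal E$ then gives $\mathcal E(t)\to0$. The equivalent norm statement follows from Lemma~\ref{lem:coercivity} together with $\int_\Omega F(u)\,dx\to0$, which holds by \eqref{eq:log_est4} and Sobolev embedding.

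The main obstacle is the invariance step. It requires justifying that the weak-solution regularity permits the deduction $\bar w_{tt}=0$ in $H^{-1}$ and the testing of the stationary system. It also requires a careful check that $\omega(Y_0)$ lies in the domain where uniqueness and the semiflow hold, so that invariance is meaningful. I would handle the latter through the energy gap $\mathcal E_\infty<d_\alpha$ combined with Lemma~\ref{lem:local_positive}.
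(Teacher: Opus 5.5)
Your proposal is correct and follows the paper's proof essentially step for step. Both arguments apply LaSalle's principle via Proposition~\ref{prop:semiflow}, use constancy of $\mathcal E$ on $\omega(Y_0)$ to force $\bar v_t\equiv0$ and hence $\bar w_t\equiv0$, use $\alpha\neq0$ to get $\bar u_t\equiv0$, and derive $\mathcal I_\alpha(\bar u,\bar w)=0$ from the stationary system, which contradicts $\mathcal J_\alpha(\bar u,\bar w)=\mathcal E_\infty<d_\alpha$ unless $(\bar u,\bar w)=(0,0)$. The only differences are minor refinements of points the paper leaves implicit: you work with forward orbits issuing from $\omega(Y_0)$ rather than complete trajectories, and you check explicitly (via the energy gap $\mathcal E_\infty<d_\alpha$ and the definition of $d_\alpha$ over $\mathcal N_\alpha$) that $\omega(Y_0)\subset\mathcal X_\alpha$.
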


\begin{proof}
Let $Y_{0}=(u_{0},u_{1},w_{0},w_{1},v_{1})\in\mathcal{X}_{\alpha}\setminus\{0_{\mathcal H}\}$ be the initial state, and let \(Y(t)=S(t)Y_0\) be the corresponding global trajectory. By Proposition \ref{prop:semiflow}, the $\omega$-limit set $\omega(Y_{0})$ is nonempty, compact, and invariant. Moreover, by Hale's invariance principle, it is contained in the largest invariant subset of
\begin{equation*}
\mathcal{M}:=\{Y\in\mathcal{X}_{\alpha}:\norm{\nabla v_{t}}_{2}=0\}.
\end{equation*}
Let $\bar{Y}(t)=(\bar{u},\bar{u}_{t},\bar{w},\bar{w}_{t},\bar{v}_{t})(t)$ be a complete trajectory contained in this invariant subset. Then, for all $t\in\mathbb{R}$,
\begin{equation*}
\norm{\nabla\bar{v}_{t}(t)}_{2}=0.
\end{equation*}
Because $\bar{v}_{t}$ satisfies homogeneous Dirichlet boundary conditions, Poincar\'e's inequality implies
\begin{equation*}
\bar{v}_{t}(t)\equiv 0 \quad \text{for all } t\in\mathbb{R}.
\end{equation*}
Hence also $\bar{v}_{tt}\equiv 0$, $\bar{v}_{ttt}\equiv 0$, and
\begin{equation*}
\bar{w}_{t}=\bar{v}_{t}+\tau\bar{v}_{tt}\equiv 0.
\end{equation*}
The second equation of \eqref{eq:main} therefore reduces to
\begin{equation*}
-\Delta\bar{v}+\alpha\bar{u}=0.
\end{equation*}
Since $\bar{v}_{t}\equiv 0$, the function $\bar{v}$ is independent of time, and so is its spatial Laplacian $-\Delta\bar{v}$. The preceding relation then shows that $\bar{u}$ is time-independent. Because $\alpha\ne 0$, we conclude that
\begin{equation*}
\bar{u}_{t}\equiv 0.
\end{equation*}
Thus $\bar{u}_{tt}\equiv 0$, and $\bar{w}\equiv\bar{v}$ is likewise independent of time. Substituting into \eqref{eq:augmented}, we obtain the stationary elliptic system
\begin{equation}\label{eq:stationary}
\begin{cases}
-\Delta\bar{u}+\alpha\bar{w}=f(\bar{u}), \\
-\Delta\bar{w}+\alpha\bar{u}=0.
\end{cases}
\end{equation}
Testing \eqref{eq:stationary} respectively by $\bar{u}$ and $\bar{w}$ and summing, we find
\begin{equation*}
Q_{\alpha}(\bar{u},\bar{w})=\int_{\Omega}|\bar{u}|^{\gamma}\ln|\bar{u}|dx,
\end{equation*}
that is, $\mathcal{I}_{\alpha}(\bar{u},\bar{w})=0$.

Let
\begin{equation*}
\mathcal{E}_{\infty}:=\lim_{t\rightarrow\infty}\mathcal{E}(t).
\end{equation*}
Since $\mathcal{E}$ is nonincreasing and bounded below by 0,
\begin{equation*}
0\le\mathcal{E}_{\infty}\le\mathcal{E}(0)<d_{\alpha}.
\end{equation*}
All kinetic terms vanish on $\omega(Y_{0})$, so
\begin{equation*}
\mathcal{J}_{\alpha}(\bar{u},\bar{w})=\mathcal{E}_{\infty}<d_{\alpha}.
\end{equation*}
If $(\bar{u},\bar{w})\ne(0,0)$, then $(\bar{u},\bar{w})\in\mathcal{N}_{\alpha}$ and therefore
\begin{equation*}
\mathcal{J}_{\alpha}(\bar{u},\bar{w})\ge d_{\alpha},
\end{equation*}
which contradicts the previous strict inequality. Hence every complete trajectory contained in the largest invariant subset of $\mathcal{M}$ is trivial, namely,
\begin{equation*}
(\bar{u},\bar{w})=(0,0).
\end{equation*}
By Proposition \ref{prop:semiflow} and Hale's invariance principle, the full trajectory converges to the zero equilibrium. In particular,
\begin{equation*}
\lim_{t\rightarrow\infty}\mathcal{E}(t)=0.
\end{equation*}
\end{proof}

\section{Conclusions}

The central structural contribution of this paper is the introduction of the coupled state variable $w=v+\tau v_{t}$. This change of variables unambiguously identifies the exact coupled energy and the appropriate static potential well geometry without resorting to artificial cross multiplier techniques. It also makes transparent---and Proposition~\ref{prop:modal-obstruction} quantifies at the linearized level---why, for a linearly coupled system with zero order interaction and mismatched propagation speeds, the available dissipation does not readily yield a uniform decay statement. In this way, the analysis is redirected toward a rigorous qualitative stabilization framework based on the exact energy identity and the coupled potential well.



\end{document}